%%%%%%%% ICML 2025 EXAMPLE LATEX SUBMISSION FILE %%%%%%%%%%%%%%%%%

\documentclass{article}

% Use the following line for the initial blind version submitted for review:
% \usepackage{icml2025}
%%%%% NEW MATH DEFINITIONS %%%%%

\usepackage{amsmath,amsfonts,bm}

% Mark sections of captions for referring to divisions of figures

% Highlight a newly defined term

% Figure reference, lower-case.

% Figure reference, capital. For start of sentence

% Section reference, lower-case.

% Section reference, capital.

% Reference to two sections.

% Reference to three sections.

% Reference to an equation, lower-case.
\def\eqref#1{equation~\ref{#1}}
% Reference to an equation, upper case

% A raw reference to an equation---avoid using if possible

% Reference to a chapter, lower-case.

% Reference to an equation, upper case.

% Reference to a range of chapters

% Reference to an algorithm, lower-case.

% Reference to an algorithm, upper case.

% Reference to a part, lower case

% Reference to a part, upper case

\def\1{\bm{1}}

% Random variables

% rm is already a command, just don't name any random variables m

% Random vectors

% Elements of random vectors

% Random matrices

% Elements of random matrices

% Vectors

% Elements of vectors

% Matrix

% Tensor
\DeclareMathAlphabet{\mathsfit}{\encodingdefault}{\sfdefault}{m}{sl}
\SetMathAlphabet{\mathsfit}{bold}{\encodingdefault}{\sfdefault}{bx}{n}

% Graph

% Sets

% Don't use a set called E, because this would be the same as our symbol
% for expectation.

% Entries of a matrix

% entries of a tensor
% Same font as tensor, without \bm wrapper

% The true underlying data generating distribution

% The empirical distribution defined by the training set

% The model distribution

% Stochastic autoencoder distributions

 % Laplace distribution

% Wolfram Mathworld says $L^2$ is for function spaces and $\ell^2$ is for vectors
% But then they seem to use $L^2$ for vectors throughout the site, and so does
% wikipedia.

 % See usage in notation.tex. Chosen to match Daphne's book.

% If accepted, instead use the following line for the camera-ready submission:
\usepackage[accepted]{icml2025}

% Recommended, but optional, packages for figures and better typesetting:
\usepackage{microtype}
\usepackage{graphicx}
\usepackage{booktabs} % for professional tables
\usepackage{bbding}
% hyperref makes hyperlinks in the resulting PDF.
% If your build breaks (sometimes temporarily if a hyperlink spans a page)
% please comment out the following usepackage line and replace
% \usepackage{icml2025} with \usepackage[nohyperref]{icml2025} above.
\usepackage{hyperref}

% Attempt to make hyperref and algorithmic work together better:
% \newcommand{\theHalgorithm}{\arabic{algorithm}}

% For theorems and such
% \usepackage{amsmath}
% \usepackage{amssymb}
% \usepackage{mathtools}
% \usepackage{amsthm}

\usepackage{comment}
\usepackage{hyperref}
\usepackage{url}
\usepackage{bbm}
\usepackage{amsthm}
\usepackage{makecell}
\usepackage{graphicx}

% % \usepackage{algorithm}
% % \usepackage{algorithmic}
% \usepackage{algpseudocode}

\usepackage{algorithm}

\usepackage{mathtools}
\usepackage{unicode}
\usepackage{booktabs}
\usepackage{multirow}
\usepackage{csquotes}
\usepackage{subcaption}
\usepackage{rotating}

% if you use cleveref..
\usepackage[capitalize,noabbrev]{cleveref}

%%%%%%%%%%%%%%%%%%%%%%%%%%%%%%%%
% THEOREMS
%%%%%%%%%%%%%%%%%%%%%%%%%%%%%%%%
\theoremstyle{plain}
\newtheorem{theorem}{Theorem}[section]

\newtheorem{lemma}[theorem]{Lemma}

\theoremstyle{definition}
\newtheorem{definition}[theorem]{Definition}

\theoremstyle{remark}

% Todonotes is useful during development; simply uncomment the next line
%    and comment out the line below the next line to turn off comments
%\usepackage[disable,textsize=tiny]{todonotes}
\usepackage[textsize=tiny]{todonotes}

% The \icmltitle you define below is probably too long as a header.
% Therefore, a short form for the running title is supplied here:
\icmltitlerunning{ROS: A GNN-based Relax-Optimize-and-Sample Framework for Max-$k$-Cut Problems}

\begin{document}

\twocolumn[
\icmltitle{ROS: A GNN-based Relax-Optimize-and-Sample Framework for Max-$k$-Cut Problems}

% It is OKAY to include author information, even for blind
% submissions: the style file will automatically remove it for you
% unless you've provided the [accepted] option to the icml2025
% package.

% List of affiliations: The first argument should be a (short)
% identifier you will use later to specify author affiliations
% Academic affiliations should list Department, University, City, Region, Country
% Industry affiliations should list Company, City, Region, Country

% You can specify symbols, otherwise they are numbered in order.
% Ideally, you should not use this facility. Affiliations will be numbered
% in order of appearance and this is the preferred way.
% \icmlsetsymbol{equal}{*}

\begin{icmlauthorlist}
\icmlauthor{Yeqing Qiu}{sribd,cuhksz}
\icmlauthor{Ye Xue}{sribd,cuhksz}
\icmlauthor{Akang Wang}{sribd,cuhksz}
\icmlauthor{Yiheng Wang}{sribd,cuhksz}
\icmlauthor{Qingjiang Shi}{sribd,tju}
\icmlauthor{Zhi-Quan Luo}{sribd,cuhksz}
%\icmlauthor{}{sch}
%\icmlauthor{}{sch}
\end{icmlauthorlist}

\icmlaffiliation{cuhksz}{The Chinese University of Hong Kong, Shenzhen, China.}
\icmlaffiliation{sribd}{Shenzhen Research Institute of Big Data, Shenzhen, China.}
\icmlaffiliation{tju}{Tongji University, Shanghai, China}

\icmlcorrespondingauthor{Ye Xue}{xueye@cuhk.edu.cn}

% You may provide any keywords that you
% find helpful for describing your paper; these are used to populate
% the "keywords" metadata in the PDF but will not be shown in the document
\icmlkeywords{Machine Learning, ICML}

\vskip 0.3in
]

% this must go after the closing bracket ] following \twocolumn[ ...

% This command actually creates the footnote in the first column
% listing the affiliations and the copyright notice.
% The command takes one argument, which is text to display at the start of the footnote.
% The \icmlEqualContribution command is standard text for equal contribution.
% Remove it (just {}) if you do not need this facility.

\printAffiliationsAndNotice{}  % leave blank if no need to mention equal contribution
% \printAffiliationsAndNotice{\icmlEqualContribution} % otherwise use the standard text.

\begin{abstract}
The Max-$k$-Cut problem is a fundamental combinatorial optimization challenge that generalizes the classic $\mathcal{NP}$-complete Max-Cut problem. While relaxation techniques are commonly employed to tackle Max-$k$-Cut, they often lack guarantees of equivalence between the solutions of the original problem and its relaxation. To address this issue, we introduce the Relax-Optimize-and-Sample (ROS) framework. In particular, we begin by relaxing the discrete constraints to the continuous probability simplex form. Next, we pre-train and fine-tune a graph neural network model to efficiently optimize the relaxed problem. Subsequently, we propose a sampling-based construction algorithm to map the continuous solution back to a high-quality Max-$k$-Cut solution. By integrating geometric landscape analysis with statistical theory, we establish the consistency of function values between the continuous solution and its mapped counterpart. Extensive experimental results on random regular graphs, the Gset benchmark, and the real-world datasets demonstrate that the proposed ROS framework effectively scales to large instances with up to $20,000$ nodes in just a few seconds, outperforming state-of-the-art algorithms. Furthermore, ROS exhibits strong generalization capabilities across both in-distribution and out-of-distribution instances, underscoring its effectiveness for large-scale optimization tasks.
\end{abstract}

\section{Introduction}
\label{sec:intro}

% max-k-cut, definition
% lots of application
% NP-complete
% Max-Cut as a special case

The \textit{Max-$k$-Cut problem} involves partitioning the vertices of a graph into $k$ disjoint subsets in such a way that the total weight of edges between vertices in different subsets is maximized. 
This problem represents a significant challenge in combinatorial optimization and finds applications across various fields, including telecommunication networks~\cite{eisenblatter2002semidefinite, gui2018pci2}, data clustering~\cite{poland2006clustering, ly2023data}, and theoretical physics~\cite{cook2019gpu, coja2022ising}. 
The Max-$k$-Cut problem is known to be $\mathcal{NP}$-complete, as it generalizes the well-known \textit{Max-Cut problem}, which is one of the 21 classic $\mathcal{NP}$-complete problems identified by~\citet{karp2010reducibility}.

% Classic algorithms on Max-k-cut / Max-Cut
% exact algorithm
% approximation algorithm
% heuristic algorithm

Significant efforts have been made to develop methods for solving Max-$k$-Cut problems~\cite{nath2024benchmark}. 
\citet{ghaddar2011branch} introduced an exact branch-and-cut algorithm based on semi-definite programming, capable of handling graphs with up to $100$ vertices. 
For larger instances, various polynomial-time approximation algorithms have been proposed. 
\citet{goemans1995improved} addressed the Max-Cut problem by first solving a semi-definite relaxation to obtain a fractional solution, then applying a randomization technique to convert it into a feasible solution, resulting in a $0.878$-approximation algorithm. 
Building on this, \citet{frieze1997improved} extended the approach to Max-$k$-Cut, offering feasible solutions with approximation guarantees. 
\citet{de2004approximate} further improved these guarantees, while \citet{shinde2021memory} optimized memory usage. Despite their strong theoretical performance, these approximation algorithms involve solving computationally intensive semi-definite programs, rendering them impractical for large-scale Max-$k$-Cut problems.
A variety of heuristic methods have been developed to tackle the scalability challenge. 
For the Max-Cut problem, \citet{burer2002rank} proposed rank-two relaxation-based heuristics, and \citet{goudet2024large} introduced a meta-heuristic approach using evolutionary algorithms. 
For Max-$k$-Cut, heuristics such as genetic algorithms~\cite{panxing2016pci}, greedy search~\cite{gui2018pci2}, multiple operator heuristics~\cite{ma2017multiple}, and local search~\cite{garvardt2023parameterized} have been proposed. While these heuristics can handle much larger Max-$k$-Cut instances, they often struggle to balance efficiency and solution quality.

Recently, \textit{machine learning} techniques have gained attention for enhancing optimization algorithms~\cite{bengio2021machine, gasse2022machine, chen2024learning}. 
Several studies, including~\citet{khalil2017learning, barrett2020exploratory, chen2020heuristic, barrett2022learning}, framed the Max-Cut problem as a sequential decision-making process, using reinforcement learning to train policy networks for generating feasible solutions. 
However, RL-based methods often suffer from extensive sampling efforts and increased complexity in action space when extended to Max-$k$-Cut, and hence entails significantly longer training and testing time. 
\citet{karalias2020erdos} focuses on subset selection, including Max-Cut as a special case. It trains a \textit{graph neural network}~(GNN) to produce a distribution over subsets of nodes of an input graph by minimizing a probabilistic penalty loss function.
After the network has been trained, a randomized algorithm is employed to sequentially decode a valid Max-Cut solution from the learned distribution.
A notable advancement by~\citet{schuetz2022combinatorial} reformulated Max-Cut as a quadratic unconstrained binary optimization (QUBO), removing binarity constraints to create a differentiable loss function. 
This loss function was used to train a GNN, followed by a simple projection onto integer variables after unsupervised training. The key feature of this approach is solving the Max-Cut problem during the training phase, eliminating the need for a separate testing stage.
Although this method can produce high-quality solutions for Max-Cut instances with millions of nodes, the computational time remains significant due to the need to optimize a parameterized GNN from scratch.
The work of~\citet{tonshoff2022one} first formulated the Max-Cut problem as a \textit{Constraint Satisfaction Problem} (CSP) and then proposed a novel GNN-based reinforcement learning approach. This method outperforms prior neural combinatorial optimization techniques and conventional search heuristics. However, to the best of our knowledge, it is limited to unweighted Max-$k$-Cut problems. NeuroCUT~\cite{shah2024neurocut} is a partitioning method based on reinforcement learning, whereas DGCLUSTER~\cite{bhowmick2024dgcluster} and DMoN~\cite{tsitsulin2023graph} utilize GNNs to optimize clustering objectives. However, these methods are specifically designed for graph clustering, which focuses on minimizing inter-cluster connections—contrary to Max-$k$-Cut, where the goal is to maximize inter-partition connections. Consequently, they are not directly applicable to our problem. Although NeuroCUT claims support for arbitrary objective functions, its node selection heuristics are tailored exclusively for graph clustering, rendering it unsuitable for Max-$k$-Cut.

\begin{figure*}[t]
  \centering
  \includegraphics[width=\linewidth]{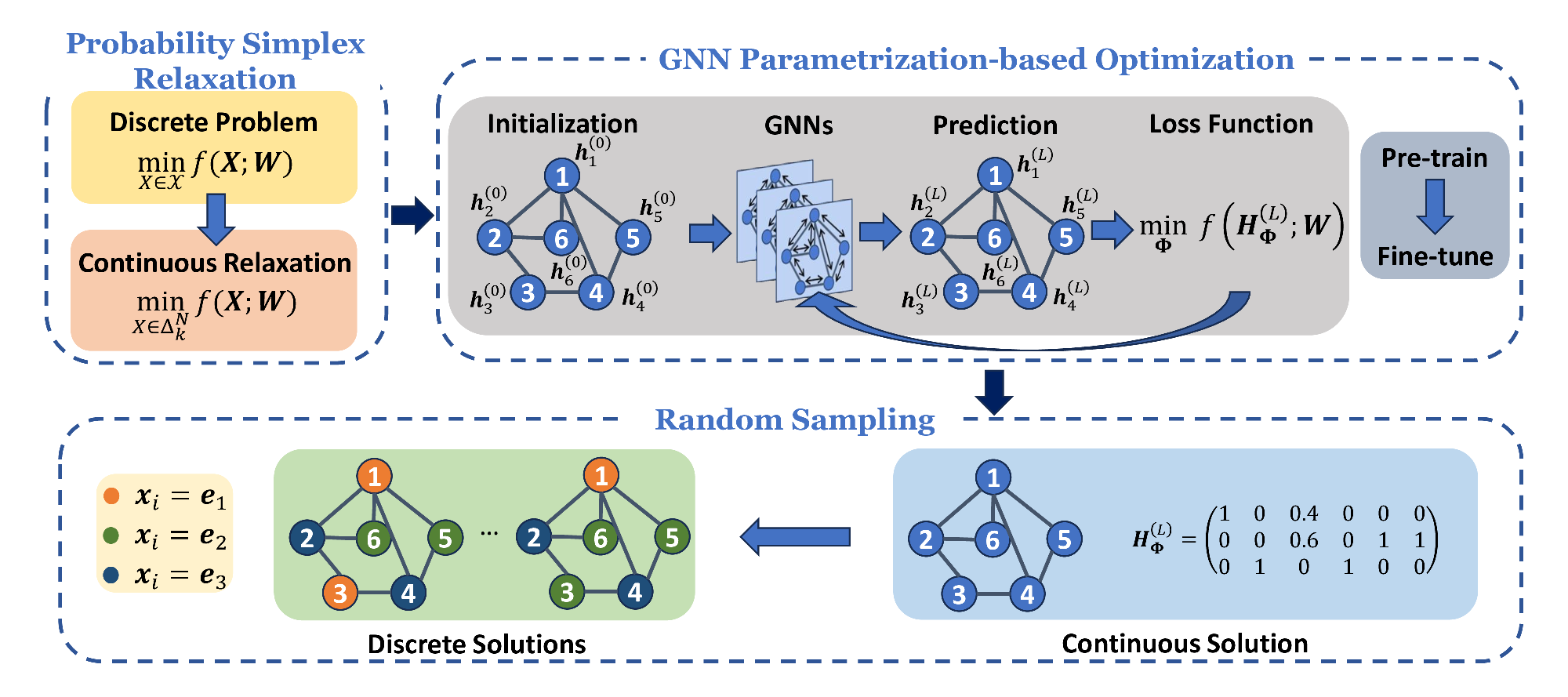}
  \caption{The Relax-Optimize-and-Sample framework.}
  \label{fig:relax-and-optimize_framework}
\end{figure*}

In this work, we propose a GNN-based \textit{Relax-Optimize-and-Sample}~(ROS) framework for efficiently solving the Max-$k$-Cut problem with arbitrary edge weights. The framework is depicted in Figure~\ref{fig:relax-and-optimize_framework}. Initially, the Max-$k$-Cut problem is formulated as a discrete optimization task. To handle this, we introduce \textit{probability simplex relaxations}, transforming the discrete problem into a continuous one. We then optimize the relaxed formulation by training parameterized GNNs in an unsupervised manner. To further improve efficiency, we apply \textit{transfer learning}, utilizing pre-trained GNNs to warm-start the training process. Finally, we refine the continuous solution using a \textit{random sampling algorithm}, resulting in high-quality Max-$k$-Cut solutions.

The key contributions of our work are summarized as follows:
\begin{itemize}
    \item \textbf{Novel Framework.} We propose a scalable ROS framework tailored to the weighted Max-$k$-Cut problem with arbitrary signs, built on solving continuous relaxations using efficient learning-based techniques.
    
    \item \textbf{Theoretical Foundations.} We conduct a rigorous theoretical analysis of both the relaxation and sampling steps. By integrating geometric landscape analysis with statistical theory, we demonstrate the consistency of function values between the continuous solution and its sampled discrete counterpart.
    
    \item \textbf{Superior Performance.} Comprehensive experiments on public benchmark datasets show that our framework produces high-quality solutions for Max-$k$-Cut instances with up to $20,000$ nodes in just a few seconds. Our approach significantly outperforms state-of-the-art algorithms, while also demonstrating strong generalization across various instance types.
\end{itemize}

\section{Preliminaries\label{sec:prel}}

\subsection{Max-\texorpdfstring{$k$}{k}-Cut Problems}
\label{section:max-k-cut_definition}

Let $\mathcal{G} = (\mathcal{V}, \mathcal{E})$ represent an undirected graph with vertex set $\mathcal{V}$ and edge set $\mathcal{E}$. 
%Each edge $(i, j) \in \mathcal{E}$ is assigned a non-negative weight $\bm{W}_{ij}$. 
Each edge $(i, j) \in \mathcal{E}$ is assigned an arbitrary weight $\bm{W}_{ij} \in \mathbb{R}$, which can have any sign. 
A \textit{cut} in $\mathcal{G}$ refers to a partition of its vertex set. The Max-$k$-Cut problem involves finding a $k$-partition $(\mathcal{V}_1, \dots, \mathcal{V}_k)$ of the vertex set $\mathcal{V}$ such that the sum of the weights of the edges between different partitions is maximized.

To represent this partitioning, we employ a $k$-dimensional one-hot encoding scheme. Specifically, we define a $k \times N$ matrix $\bm{X} \in \mathbb{R}^{k \times N}$ where each column represents a one-hot vector. The Max-$k$-Cut problem can be formulated as:
\begin{equation}
    \begin{aligned}
        & \underset{\bm{X} \in \mathbb{R}^{k \times N} }{\text{max}}  && \frac{1}{2} \sum_{i=1}^N \sum_{j=1}^N \bm{W}_{ij} \left( 1 - \bm{X}_{\cdot i}^\top \bm{X}_{\cdot j} \right) \\
        & \quad \text{s. t.}  && \bm{X}_{\cdot j} \in \{ \bm{e}_1, \bm{e}_2, \dots, \bm{e}_k \} \quad\quad \forall j \in \mathcal{V},
    \end{aligned}  \label{eq:max-k-cut_formulation}
\end{equation}
where $\bm{X}_{\cdot j}$ denotes the $j^{th}$ column of $\bm{X}$, $\bm{W}$ is a symmetric matrix with zero diagonal entries, and $\bm{e}_\ell \in \mathbb{R}^k$ is a one-hot vector with the $\ell^{th}$ entry set to $1$. This formulation aims to maximize the total weight of edges between different partitions, ensuring that each node is assigned to exactly one partition, represented by the one-hot encoded vectors.
We remark that weighted Max-$k$-Cut problems with arbitrary signs is a generalization of classic Max-Cut problems and arise in many interesting applications~\cite{de1995exact, poland2006clustering,hojny2021mixed}.

\subsection{Graph Neural Networks}
\label{section:GNN}

GNNs are powerful tools for learning representations from graph-structured data. 
GNNs operate by iteratively aggregating information from a node's neighbors, enabling each node to capture increasingly larger sub-graph structures as more layers are stacked. 
This process allows GNNs to learn complex patterns and relationships between nodes, based on their local connectivity.

At the initial layer ($l = 0$), each node $i \in \mathcal{V}$ is assigned a feature vector $\bm{h}_i^{(0)}$, which typically originates from node features or labels. 
The representation of node $i$ is then recursively updated at each subsequent layer through a parametric aggregation function $f_{\bm{\Phi}^{(l)}}$, defined as:
\begin{align}
    \bm{h}_i^{(l)} = f_{\bm{\Phi}^{(l)}} \left( \bm{h}_i^{(l-1)}, \{\bm{h}_j^{(l-1)} : j \in \mathcal{N}(i)\} \right),
\end{align}
where $\bm{\Phi}^{(l)}$ represents the trainable parameters at layer $l$, $\mathcal{N}(i)$ denotes the set of neighbors of node $i$, and $\bm{h}_i^{(l)}$ is the node's embedding at layer $l$ for $l \in \{1, 2, \cdots, L\}$. 
This iterative process enables the GNN to propagate information throughout the graph, capturing both local and global structural properties.

\section{A Relax-Optimize-and-Sample Framework}

In this work, we leverage continuous optimization techniques to tackle Max-$k$-Cut problems, introducing a novel ROS framework. 
Acknowledging the inherent challenges of discrete optimization, we begin by relaxing the problem to probability simplices and concentrate on optimizing this relaxed version. 
To achieve this, we propose a machine learning-based approach. 
Specifically, we model the relaxed problem using GNNs, pre-training the GNN on a curated graph dataset before fine-tuning it on the specific target instance. 
After obtaining high-quality solutions to the relaxed continuous problem, we employ a random sampling procedure to derive a discrete solution that preserves the same objective value.

\subsection{Probability Simplex Relaxations}
\label{sec:probability_simplex_relaxation}

To simplify the formulation of the problem~(\ref{eq:max-k-cut_formulation}), we remove constant terms and negate the objective function, yielding an equivalent formulation expressed as follows:
\begin{equation*}
    \begin{aligned}
      & \underset{\bm{X} \in \mathcal{X}}{\text{min}} \quad f(\bm{X};\bm{W}) \coloneqq \text{Tr}(\bm{X}\bm{W}\bm{X}^\top), \hspace{2cm} (\textbf{P})
    \end{aligned}
\end{equation*}
where $\mathcal{X} \coloneqq \left\{\bm{X} \in \mathbb{R}^{k \times N}: \bm{X}_{\cdot j} \in \{\bm{e}_1, \bm{e}_2, \dots, \bm{e}_k\}, \forall j \in \mathcal{V} \right\}$. It is important to note that the matrix $\bm{W}$ is indefinite due to its diagonal entries being set to zero.

Given the challenges associated with solving the discrete problem \textbf{P}, we adopt a naive relaxation approach, obtaining the convex hull of $\mathcal{X}$ as the Cartesian product of $N$ $k$-dimensional probability simplices, denoted by $\Delta_k^N$. Consequently, the discrete problem \textbf{P} is relaxed into the following continuous optimization form:
\begin{equation*}
    \begin{aligned}
         \underset{\bm{X} \in \Delta_k^N }{\text{min}} & \quad  f(\bm{X}; \bm{W}). \hspace{4.5cm} (\overline{\textbf{P}})
    \end{aligned}
\end{equation*}
Before optimizing problem~$\overline{\textbf{P}}$, we will characterize its \textit{geometric landscape}. To facilitate this, we introduce the following definition.

\begin{definition} \label{definition:neighborhood}
    Let $\overline{\bm{X}}$ denote a point in $\Delta_k^N$. We define the neighborhood induced by $\overline{\bm{X}}$ as follows: 
    \begin{equation*}
        \begin{aligned}
            \mathcal{N}(\overline{\bm{X}}) \coloneqq \left\{\bm{X} \in \Delta_k^N  
                \left|  
                \begin{aligned}
                     &  \sum_{i \in \mathcal{K}(\overline{\bm{X}}_{\cdot j})} \bm{X}_{ij} = 1, && \forall j \in \mathcal{V}  
                \end{aligned}
                \right.
            \right\},
        \end{aligned}
    \end{equation*}    
    where $\mathcal{K}(\overline{\bm{X}}_{\cdot j}) \coloneqq \{i\in \{1, \ldots, k\} \mid \overline{\bm{X}}_{ij} > 0  \}$.
\end{definition}

The set $\mathcal{N}(\overline{\bm{X}})$ represents a neighborhood around $\overline{\bm{X}}$, where each point in $\mathcal{N}(\overline{\bm{X}})$ can be derived by allowing each non-zero entry of the matrix $\overline{\bm{X}}$ to vary freely, while the other entries are set to zero. Utilizing this definition, we can establish the following theorem.

\begin{theorem}
\label{theorem:local_optimal_neighborhood}
    Let $\bm{X}^\star$ denote a globally optimal solution to $\overline{\textbf{P}}$, and let $\mathcal{N}(\bm{X}^\star)$ be its induced neighborhood. Then 
    \begin{equation*}
        \begin{aligned}
           & f(\bm{X}; \bm{W}) = f(\bm{X}^\star; \bm{W}), &&  \forall \bm{X} \in \mathcal{N}(\bm{X}^{\star}).
        \end{aligned}
    \end{equation*}
\end{theorem}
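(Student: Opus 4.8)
The plan is to exploit the single structural fact that $\bm{W}$ has zero diagonal, which makes the quadratic objective \emph{multi-affine}: writing $\bm{x}_j := \bm{X}_{\cdot j}$ we have $f(\bm{X};\bm{W}) = \sum_{i,j} \bm{W}_{ij}\langle \bm{x}_i,\bm{x}_j\rangle$, and since $\bm{W}_{jj}=0$ the dependence on any single column (with the others held fixed) is affine rather than quadratic. First I would set up an exact expansion around the optimum. For $\bm{X}\in\mathcal{N}(\bm{X}^\star)$ write $\bm{X} = \bm{X}^\star + \bm{D}$; the displacement then ranges exactly over the tangent space $T := \{\bm{D}\in\mathbb{R}^{k\times N}: \sum_\ell \bm{D}_{\ell j}=0\ \forall j,\ \bm{D}_{\ell j}=0 \text{ if } \ell\notin\mathcal{K}(\bm{X}^\star_{\cdot j})\}$. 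Because $f$ is a quadratic polynomial, the expansion $f(\bm{X}^\star+\bm{D}) = f(\bm{X}^\star) + L(\bm{D}) + Q(\bm{D})$ is exact, with $L$ linear and $Q(\bm{D})=\Tr(\bm{D}\bm{W}\bm{D}^\top)$ the homogeneous quadratic part.

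Next I would extract two consequences of $\bm{X}^\star$ being a \emph{global} minimizer together with the fact that it lies in the relative interior of $\mathcal{N}(\bm{X}^\star)$ (each entry indexed by $\mathcal{K}(\bm{X}^\star_{\cdot j})$ is strictly positive by definition of $\mathcal{K}$). Relative interiority means $\bm{X}^\star\pm\epsilon\bm{D}$ stays feasible for small $\epsilon>0$ and every $\bm{D}\in T$; a first-order expansion in $\epsilon$ then forces $L(\bm{D})=0$ for all $\bm{D}\in T$. Hence $f(\bm{X}^\star+\bm{D})-f(\bm{X}^\star)=Q(\bm{D})$ on all of $T$, and rescaling any $\bm{D}\in T$ to a feasible displacement $t\bm{D}$ upgrades global optimality to $t^2 Q(\bm{D})\ge 0$, so $Q$ is positive semidefinite on $T$.

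Finally I would show $Q$ vanishes identically on $T$. The key observation is that $T$ is spanned by the elementary directions $\bm{D}^{(a,b,j)} := (\bm{e}_a-\bm{e}_b)\bm{e}_j^\top$ with $a,b\in\mathcal{K}(\bm{X}^\star_{\cdot j})$, since each column of a tangent matrix is a zero-sum vector supported on $\mathcal{K}(\bm{X}^\star_{\cdot j})$ and is therefore a combination of such within-support differences. A direct computation gives $Q(\bm{D}^{(a,b,j)}) = \bm{W}_{jj}\,\|\bm{e}_a-\bm{e}_b\|^2 = 0$, precisely because the diagonal of $\bm{W}$ is zero. Since a positive semidefinite quadratic form has a \emph{linear} zero set (Cauchy--Schwarz for its associated bilinear form $B$ gives that $Q(v)=0$ implies $B(v,\cdot)=0$), vanishing on a spanning set forces $Q\equiv 0$ on $T$. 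Therefore $f(\bm{X})=f(\bm{X}^\star)$ for every $\bm{X}\in\mathcal{N}(\bm{X}^\star)$.

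I expect the main obstacle to be this last step: global optimality only hands us the one-sided bound $Q\succeq 0$, and it is tempting to stop there, but semidefiniteness alone does not give constancy. The crux is recognizing that the zero-diagonal structure forces $Q$ to annihilate each single-node elementary direction, and then invoking the linearity of the null space of a PSD form to propagate this from a spanning set to all of $T$; verifying that the $\bm{D}^{(a,b,j)}$ genuinely span $T$ is the one routine check I would include in full.
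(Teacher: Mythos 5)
Your proof is correct, but it takes a genuinely different route from the paper's. The paper argues column by column: holding all but the $i$-th column fixed, the zero diagonal of $\bm{W}$ makes the objective affine in $\bm{X}_{\cdot i}$, namely $2\bm{X}_{\cdot i}^\top\bm{Y}_{\cdot i}$ with $\bm{Y}_{\cdot i}=\sum_{j\neq i}\bm{W}_{ij}\bm{X}_{\cdot j}$; two-sided $\pm\epsilon$ perturbations within the support then force all coordinates $\bm{Y}_{\ell i}$ for $\ell\in\mathcal{K}(\bm{X}^\star_{\cdot i})$ to coincide, so this affine function is constant on the column's sub-simplex, and the theorem follows by iterating over columns using that each intermediate matrix remains globally optimal. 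You instead treat all columns at once via the exact expansion $f(\bm{X}^\star+\bm{D})=f(\bm{X}^\star)+L(\bm{D})+Q(\bm{D})$: relative interiority kills $L$ on the tangent space $T$, global optimality makes $Q$ positive semidefinite on $T$, the zero diagonal gives $Q\bigl((\bm{e}_a-\bm{e}_b)\bm{e}_j^\top\bigr)=\bm{W}_{jj}\,\|\bm{e}_a-\bm{e}_b\|^2=0$ on a spanning set of $T$, and the Cauchy--Schwarz null-space property of semidefinite forms then forces $Q\equiv 0$ on $T$. Your version avoids the induction and the need to verify that intermediate iterates stay globally optimal, at the cost of the (correct but less elementary) second-order and PSD-kernel reasoning; the paper never has to confront the quadratic cross-terms because moving one column at a time keeps the problem affine. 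One small wording slip: the feasible displacements do not range \emph{exactly} over $T$ --- nonnegativity confines them to a polytope inside $T$ with $\bm{0}$ in its relative interior --- but your argument only uses the two containments you actually invoke (every feasible displacement lies in $T$, and every $\bm{D}\in T$ rescales to a feasible one), so nothing breaks.
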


Theorem~\ref{theorem:local_optimal_neighborhood} states that for a globally optimal solution $\bm{X}^{\star}$, every point within its neighborhood $\mathcal{N}(\bm{X}^{\star})$ shares the same objective value as $\bm{X}^{\star}$, thus forming a \textit{basin} in the geometric landscape of $f(\bm{X}; \bm{W})$. If $\bm{X}^{\star} \in \mathcal{X}$ (i.e., an integer solution), then $\mathcal{N}(\bm{X}^{\star})$ reduces to the singleton set $\left\{\bm{X}^{\star}\right\}$. Conversely, if $\bm{X}^{\star} \notin \mathcal{X}$, there exist $\prod_{j \in \mathcal{V}}|\mathcal{K}(\bm{X}^{\star}_{\cdot j})|$ unique integer solutions within $\mathcal{N}(\bm{X}^{\star})$ that maintain the same objective value as $\bm{X}^{\star}$. This indicates that once a globally optimal solution to the relaxed problem $\overline{\textbf{P}}$ is identified, it becomes straightforward to construct an optimal solution for the original problem \textbf{P} that preserves the same objective value.

According to~\citet{carlson1966scheduling}, among all globally optimal solutions to the relaxed problem $\overline{\textbf{P}}$, the integer solution always exists. Theorem~\ref{theorem:local_optimal_neighborhood} extends this result, indicating that if the globally optimal solution is fractional, we can provide a straightforward method to derive its integer counterpart.
We remark that it is highly non-trivial to guarantee that the feasible Max-$k$-Cut solution obtained from the relaxation one has the same quality.

\textbf{Example}. 
Consider a Max-Cut problem ($k=2$) associated with the weight matrix $\bm{W}$. We optimize its relaxation and obtain the optimal solution $\bm{X}^\star$.
	\begin{equation*}
		\begin{aligned}
			\bm{W} \coloneqq 
			\begin{pmatrix} 
				0&1&1 \\  
				1&0&1 \\
				1&1&0  
			\end{pmatrix}, 
			\bm{X}^\star \coloneqq 
			     \begin{pmatrix}
			     	  p & 1 & 0 \\ 
			     	  1-p & 0 & 1 
			     \end{pmatrix},
		\end{aligned}
	\end{equation*}
where $p \in \left[0, 1\right]$. From the neighborhood $\mathcal{N}(\bm{X}^{\star})$, we can identify the following integer solutions that maintain the same objective value.
	\begin{equation*}
	\begin{aligned}
		\bm{X}_1^\star = 
		\begin{pmatrix}
			0 & 1 & 0 \\ 
			1 & 0 & 1 
		\end{pmatrix}, 
		\bm{X}_2^\star = 
		\begin{pmatrix}
			1 & 1 & 0 \\ 
			0 & 0 & 1 
		\end{pmatrix}.
	\end{aligned}
\end{equation*}

Given that $\overline{\textbf{P}}$ is a non-convex program, identifying its global minimum is challenging.
Consequently, the following two critical questions arise.
\begin{itemize}
    \item[\textbf{Q1}.] Since solving $\overline{\textbf{P}}$ to global optimality is $\mathcal{NP}$-hard, how to efficiently optimize $\overline{\textbf{P}}$ for high-quality solutions?

    \item[\textbf{Q2}.] Given $\overline{\bm{X}} \in \Delta_k^N \setminus \mathcal{X} $ as a high-quality solution to $\overline{\textbf{P}}$, can we construct a feasible solution $\hat{\bm{X}} \in \mathcal{X}$ to \textbf{P} such that $f(\hat{\bm{X}}; \bm{W}) = f(\overline{\bm{X}}; \bm{W})$? 
\end{itemize}

We provide a positive answer to \textbf{Q2} in Section~\ref{sec:random_sampling}, while our approach to addressing \textbf{Q1} is deferred to Section~\ref{sec:GNN_optimization}.

\subsection{Random Sampling}
\label{sec:random_sampling}

Let $\overline{\bm{X}} \in \Delta_k^N \setminus \mathcal{X}$ be a feasible solution to the relaxation $\overline{\textbf{P}}$. 
Our goal is to construct a feasible solution $\bm{X} \in \mathcal{X}$ for the original problem \textbf{P}, ensuring that the corresponding objective values are equal. Inspired by Theorem~\ref{theorem:local_optimal_neighborhood}, we propose a \textit{random sampling} procedure, outlined in Algorithm~\ref{alg:random}. 
In this approach, we sample each column $\bm{X}_{\cdot i}$ of the matrix $\bm{X}$ from a categorical distribution characterized by the event probabilities $\overline{\bm{X}}_{\cdot i}$ (denoted as $\text{Cat}(\bm{x}; \bm{p} = \overline{\bm{X}}_{\cdot i})$ in Step 3 of Algorithm~\ref{alg:random}). 
This randomized approach yields a feasible solution $\hat{\bm{X}}$ for \textbf{P}. 
However, since Algorithm~\ref{alg:random} incorporates randomness in generating $\hat{\bm{X}}$ from $\overline{\bm{X}}$, the value of $f(\hat{\bm{X}}; \bm{W})$ becomes random as well. 
This raises the critical question: is this value greater or lesser than $f(\overline{\bm{X}}; \bm{W})$? 
We address this question in Theorem~\ref{theorem:equal_obj_value}.

\begin{algorithm}[h]
\caption{Random Sampling}
\label{alg:random}
    \begin{algorithmic}[1]
    \STATE \textbf{Input:} $\overline{\bm{X}} \in \Delta_k^N$  % \Comment{any feasible solution to $\overline{\bm{P}}$ }
        \FOR{$i = 1$ to $N$}      % \Comment{each dimension is independent}      
        
            \STATE {$\hat{\bm{X}}_{\cdot i} \sim \text{Cat}(\bm{x}; \bm{p} = \overline{\bm{X}}_{\cdot i})$ } \label{step:sample} 
            % \Comment{sampling from a categorical distribution}
            % 
        \ENDFOR
    \STATE{\textbf{Output:} $\hat{\bm{X}} \in \mathcal{X}$ }   
    
    \end{algorithmic}
\end{algorithm}

\begin{theorem}
    Let $\overline{\bm{X}}$ and $\hat{\bm{X}}$ denote the input and output of Algorithm~\ref{alg:random}, respectively. Then, we have $\mathbb{E}_{\hat{\bm{X}}}[f(\hat{\bm{X}};\bm{W})]=f(\overline{\bm{X}};\bm{W})$. 
    \label{theorem:equal_obj_value}
\end{theorem}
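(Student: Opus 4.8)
The plan is to reduce $f$ to an explicit bilinear sum over pairs of vertices, push the expectation inside by linearity, and then exploit two structural facts about the sampling scheme and the data: the independence of the columns of $\hat{\bm{X}}$ across vertices, and the vanishing diagonal of $\bm{W}$.

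First I would rewrite the objective in a form that exposes its dependence on individual columns. Expanding the trace gives
\begin{equation*}
f(\bm{X};\bm{W}) = \Tr(\bm{X}\bm{W}\bm{X}^\top) = \sum_{i=1}^N \sum_{j=1}^N \bm{W}_{ij}\,\bm{X}_{\cdot i}^\top \bm{X}_{\cdot j}.
\end{equation*}
Applying this to $\hat{\bm{X}}$ and using linearity of expectation yields $\mathbb{E}_{\hat{\bm{X}}}[f(\hat{\bm{X}};\bm{W})] = \sum_{i,j}\bm{W}_{ij}\,\mathbb{E}[\hat{\bm{X}}_{\cdot i}^\top\hat{\bm{X}}_{\cdot j}]$, so the whole computation reduces to evaluating the second moments $\mathbb{E}[\hat{\bm{X}}_{\cdot i}^\top \hat{\bm{X}}_{\cdot j}]$ and comparing them termwise with $\overline{\bm{X}}_{\cdot i}^\top\overline{\bm{X}}_{\cdot j}$.

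I would then treat the off-diagonal and diagonal terms separately. For $i \neq j$, the columns $\hat{\bm{X}}_{\cdot i}$ and $\hat{\bm{X}}_{\cdot j}$ are sampled independently in Algorithm~\ref{alg:random}, and each is an unbiased estimator of the corresponding column of $\overline{\bm{X}}$, since $\mathbb{E}[\hat{\bm{X}}_{\cdot i}] = \sum_{\ell=1}^k \overline{\bm{X}}_{\ell i}\,\bm{e}_\ell = \overline{\bm{X}}_{\cdot i}$ by the definition of the categorical distribution $\text{Cat}(\bm{x};\bm{p}=\overline{\bm{X}}_{\cdot i})$. Independence then factorizes the moment as $\mathbb{E}[\hat{\bm{X}}_{\cdot i}^\top\hat{\bm{X}}_{\cdot j}] = \mathbb{E}[\hat{\bm{X}}_{\cdot i}]^\top\mathbb{E}[\hat{\bm{X}}_{\cdot j}] = \overline{\bm{X}}_{\cdot i}^\top\overline{\bm{X}}_{\cdot j}$, matching the corresponding term of $f(\overline{\bm{X}};\bm{W})$ exactly.

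The diagonal terms are where care is needed, and I expect this to be the only real obstacle. For $i=j$, the vector $\hat{\bm{X}}_{\cdot i}$ is one-hot, so $\hat{\bm{X}}_{\cdot i}^\top\hat{\bm{X}}_{\cdot i} = 1$ deterministically, whereas $\overline{\bm{X}}_{\cdot i}^\top\overline{\bm{X}}_{\cdot i} = \|\overline{\bm{X}}_{\cdot i}\|_2^2$ can be strictly less than $1$; hence the naive per-term identity fails on the diagonal. The resolution is that these terms never contribute on either side: since $\bm{W}$ is specified to have zero diagonal, $\bm{W}_{ii}=0$ for every $i$, so the diagonal contributions are annihilated simultaneously in both $\mathbb{E}_{\hat{\bm{X}}}[f(\hat{\bm{X}};\bm{W})]$ and $f(\overline{\bm{X}};\bm{W})$. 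Restricting both double sums to $i\neq j$ and substituting the off-diagonal computation gives
\begin{equation*}
\mathbb{E}_{\hat{\bm{X}}}[f(\hat{\bm{X}};\bm{W})] = \sum_{i\neq j}\bm{W}_{ij}\,\overline{\bm{X}}_{\cdot i}^\top\overline{\bm{X}}_{\cdot j} = \sum_{i,j}\bm{W}_{ij}\,\overline{\bm{X}}_{\cdot i}^\top\overline{\bm{X}}_{\cdot j} = f(\overline{\bm{X}};\bm{W}),
\end{equation*}
which is the claimed identity. Everything beyond recognizing the role of the zero-diagonal hypothesis is routine: linearity of expectation and independence of the per-vertex samples do the rest.
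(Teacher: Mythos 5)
Your proof is correct and follows essentially the same route as the paper's: expand $\Tr(\hat{\bm{X}}\bm{W}\hat{\bm{X}}^\top)$ into a double sum, apply linearity of expectation, use independence of the columns to evaluate the cross terms, and invoke the zero diagonal of $\bm{W}$ to dispose of the $i=j$ terms. The only cosmetic difference is that you compute $\mathbb{E}[\hat{\bm{X}}_{\cdot i}^\top\hat{\bm{X}}_{\cdot j}]$ by factorizing into first moments of the unbiased columns, whereas the paper rewrites the inner product of one-hot vectors as the indicator of agreement and computes $\mathbb{P}(\hat{\bm{X}}_{\cdot i}=\hat{\bm{X}}_{\cdot j})=\sum_{\ell}\overline{\bm{X}}_{\ell i}\overline{\bm{X}}_{\ell j}$; both yield the identical expression, and your explicit observation that the diagonal identity would fail without $\bm{W}_{ii}=0$ is a point the paper leaves implicit.
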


Theorem~\ref{theorem:equal_obj_value} states that $f(\hat{\bm{X}};\bm{W})$ is equal to $f(\overline{\bm{X}};\bm{W})$ in expectation.
This implies that the random sampling procedure operates on a fractional solution, yielding Max-$k$-Cut feasible solutions with the same objective values in a probabilistic sense. While the Lovász-extension-based method~\cite{bach2013learning} also offers a framework for continuous relaxation, achieving similar theoretical results for arbitrary \(k\) and edge weights \(\bm{W}_{i,j} \in \mathbb{R}\) is not always guaranteed. 
In practice, we execute Algorithm~\ref{alg:random} $T$ times and select the solution with the lowest objective value of $f$ as our best result. 
We remark that the theoretical interpretation in Theorem~\ref{theorem:equal_obj_value} distinguishes our sampling algorithm from the existing ones in the literature~\cite{karalias2020erdos, toenshoff2021graph, michael2024continuous}.

\subsection{GNN Parametrization-Based Optimization}
\label{sec:GNN_optimization}

To solve the problem $\overline{\textbf{P}}$, we propose an efficient learning-to-optimize~(L2O) method based on GNN parametrization. This approach reduces the laborious iterations typically required by classical optimization methods (e.g., mirror descent). Additionally, we introduce a \enquote{pre-train + fine-tune} strategy, where the model is endowed with prior graph knowledge during the pre-training phase, significantly decreasing the computational time required to optimize $\overline{\textbf{P}}$.

\textbf{GNN Parametrization.} 
The Max-$k$-Cut problem can be framed as a node classification task, allowing us to leverage GNNs to aggregate node features, and obtain high-quality solutions. Initially, we assign a random embedding $\bm{h}_i^{(0)}$ to each node $i$ in the graph $\mathcal{G}$. We adopt the GNN architecture proposed by~\citet{morris2019weisfeiler}, utilizing an $L$-layer GNN with updates at layer $l$ given by:
\begin{equation*}
    \begin{aligned}
        \bm{h}_i^{(l)} \coloneqq \sigma \left( \bm{\Phi}_1^{(l)} \bm{h}_i^{(l-1)} + \bm{\Phi}_2^{(l)} \sum_{j \in \mathcal{N}(i)} \bm{W}_{ji} \bm{h}_j^{(l-1)} \right),
    \end{aligned}
\end{equation*}
where $\sigma(\cdot)$ is an activation function, and $\bm{\Phi}_1^{(l)}$ and $\bm{\Phi}_2^{(l)}$ are the trainable parameters at layer $l$. This formulation facilitates efficient learning of node representations by leveraging both node features and the underlying graph structure. After processing through $L$ layers of GNN, we obtain the final output $\bm{H}^{(L)}_{\bm{\Phi}} \coloneqq [\bm{h}_1^{(L)}, \dots, \bm{h}_N^{(L)}] \in \mathbb{R}^{k \times N}$. A softmax activation function is applied in the last layer to ensure $\bm{H}^{(L)}_{\bm{\Phi}} \in \Delta_k^N$, making the final output feasible for $\overline{\bm{P}}$.

\textbf{\enquote{Pre-train + Fine-tune} Optimization.}
We propose a \enquote{pre-train + fine-tune} framework for learning the trainable weights of GNNs. Initially, the model is trained on a collection of pre-collected datasets to produce a pre-trained model. Subsequently, we fine-tune this pre-trained model for each specific testing instance. This approach equips the model with prior knowledge of graph structures during the pre-training phase, significantly reducing the overall solving time. Furthermore, it allows for out-of-distribution generalization due to the fine-tuning step.

In the pre-training phase, the trainable parameters $\bm{\Phi} \coloneqq (\bm{\Phi}_1^{(1)}, \bm{\Phi}_2^{(1)},\dots,\bm{\Phi}_1^{(L)}, \bm{\Phi}_2^{(L)})$  are optimized using the Adam optimizer with \textit{random initialization}, targeting the objective 
\begin{equation*}
    \begin{aligned}
        \min_{\bm{\Phi}} \quad \mathcal{L}_{\text{pre-training}}(\bm{\Phi}) \coloneqq \frac{1}{M} \sum_{m=1}^{M} f(\bm{H}_\Phi^{(L)}; \bm{W}_{\text{train}}^{(m)}),
    \end{aligned}
\end{equation*}
where $\mathcal{D} \coloneqq \{\bm{W}_{\text{train}}^{(1)}, \dots, \bm{W}_{\text{train}}^{(M)}\}$ represents the pre-training dataset. In the fine-tuning phase, for a problem instance $\bm{W}_{\text{test}}$, the Adam optimizer seeks to solve
\begin{equation*}
    \begin{aligned}
        \min_{\bm{\Phi}} \quad \mathcal{L}_{\text{fine-tuning}}(\bm{\Phi}) \coloneqq f(\bm{H}_\Phi^{(L)}; \bm{W}_{\text{test}}),
    \end{aligned}
\end{equation*}
initialized with the pre-trained parameters.

Moreover, to enable the GNN model to fully adapt to specific problem instances, the pre-training phase can be omitted, enabling the model to be directly trained and tested on the same instance. While this direct approach may necessitate more computational time, it often results in improved performance regarding the objective function. Consequently, users can choose to include a pre-training phase based on the specific requirements of their application scenarios.

\section{Experiments}

\subsection{Experimental Settings}

We compare the performance of \texttt{ROS} against traditional methods as well as L2O algorithms for solving the Max-$k$-Cut problem. Additionally, we assess the impact of the \enquote{Pre-train} stage in the GNN parametrization-based optimization. 
The source code is available at \url{https://github.com/NetSysOpt/ROS}.

\textbf{Baseline Algorithms.}
We denote our proposed algorithms by \texttt{ROS} and compare them against both traditional algorithms and L2O methods. 
When the pre-training step is skipped, we refer to our algorithm as \texttt{ROS-vanilla}.
The following traditional Max-$k$-Cut algorithms are considered as baselines:
(i)~\texttt{GW}~\cite{goemans1995improved}: an method with a $0.878$-approximation guarantee based on semi-definite relaxation;
(ii)~\texttt{BQP}~\cite{gui2018pci2}: a local search method designed for binary quadratic programs;
(iii)~\texttt{Genetic}~\cite{panxing2016pci}: a genetic algorithm specifically for Max-$k$-Cut problems;
(iv)~\texttt{MD}: a mirror descent algorithm that addresses the relaxed problem $\overline{\textbf{P}}$ with a convergence tolerance at $10^{-8}$ and adopts the same random sampling procedure;
(v)~\texttt{LPI}~\cite{goudet2024large}: an evolutionary algorithm featuring a large population organized across different islands;
(vi)~\texttt{MOH}~\cite{ma2017multiple}: a heuristic algorithm based on multiple operator heuristics, employing various distinct search operators within the search phase.
% (vii)~\texttt{Rank2}~\cite{burer2002rank}: a heuristic based on rank-2 relaxation.
For the L2O method, we primarily examine the state-of-the-art baseline algorithms:
(vii)~\texttt{PI-GNN}~\cite{schuetz2022combinatorial}: an unsupervised method for QUBO problems, which can model the weighted Max-Cut problem, delivering commendable performance.
% It is the first method to eliminate the dependence on large, labeled training datasets typically required by supervised learning approaches. 
(viii)~\texttt{ECO-DQN}~\cite{barrett2020exploratory}: a reinforcement L2O method introducing test-time exploratory refinement for Max-Cut problems.
(ix)~\texttt{ANYCSP}~\cite{tonshoff2022one}: an unsupervised GNN-based search heuristic for CSPs, which can model the unweighted Max-$k$-Cut problem, leveraging a compact graph representation and global search action with the default time limit of $180$ seconds.

\begin{table*}[ht]
\centering
\caption{Statistics of the training and testing datasets.}
\begin{tabular}{lccccc}
\toprule
     & Dataset & Graph Type & $N$ & \# Graphs & Weight Type \\
\midrule
Train & Random Regular Graph & regular & $100$ & $500$ & unweighted  \\
\midrule
\multirow{4}{*}{Test}
     & Random Regular Graph & regular & $100, 1{,}000, 10{,}000$ & $60$ & unweighted \\
     & Gset & random, planar, toroidal & $800\sim 20{,}000$ & $71$ & unweighted, weighted \\
     & COLOR & real-world & $74, 87, 138$ & $3$ & unweighted\\
     & Bitcoin-OTC & real-world & $5{,}881$ & $1$ & weighted \\
\bottomrule
\end{tabular}
\label{tab:data_stats}
\end{table*}

% We evaluate both unweighted and weighted versions, where the latter is derived by perturbing the original edge weights uniformly within $[-10\%,10\%]$, generating 10 perturbed instances per graph. 

\textbf{Datasets.} We conduct experiments on the following datasets.
\begin{itemize}
    \item \textbf{$r$-Random regular graphs}~\cite{schuetz2022combinatorial}: Each node has the same degree $r$. Edge weights are either $0$ or $1$. 
    \item \textbf{Gset}~\cite{ye2003}: A well-known Max-$k$-Cut benchmark comprising toroidal, planar, and random graphs with $800\sim20,000$ nodes and edge densities between $2\%$ and $6\%$. Edge weights are either $0$ or $\pm1$.
    \item \textbf{COLOR}~\cite{trick2003}: A collection of dense graphs derived from literary texts, where nodes represent characters and edges indicate co-occurrence. These graphs have large chromatic numbers ($\chi \approx 10$), making them suitable for Max-$k$-Cut. Edge weights are either $0$ or $1$.
    \item \textbf{Bitcoin-OTC}~\cite{kumar2016edge}: A real-world signed network with $5,881$ nodes and $35,592$ edges, weighted from $-10$ to $10$, capturing trust relationships among Bitcoin users.
\end{itemize}

The construction of the training and testing datasets is summarized in Table~\ref{tab:data_stats}. 
The training set consists of $500$ 3-regular, $500$ 5-regular graphs, and $500$ 7-regular graphs with $100$ nodes each, corresponding to the cases $k=2$, $k=3$, and $k=10$ respectively.
The test set of random regular graphs includes $20$ 3-regular and $20$ 5-regular graphs for each $k \in \{2, 3\}$, with node counts of $100$, $1{,}000$, and $10{,}000$. For the Gset benchmark, we evaluate both unweighted and weighted variants. 
The unweighted test set includes all Gset instances, with results reported in Tables~\ref{tab:3} and~\ref{tab:4} in Appendix~\ref{app:gset}. 
For the weighted variant, we generate perturbations of the four largest Gset graphs (G70, G72, G77, G81) by multiplying each edge weight by $\sigma \sim \mathcal{U}[l, u]$, creating 10 perturbed instances per graph. We examine three distinct perturbation regimes: (i) mild perturbations ($[0.9, 1.1]$), (ii) moderate variations ($[0, 10]$), and (iii) extreme modifications ($[0, 100]$). The moderate perturbation results ($[0, 10]$) are presented in Table~\ref{tab:weighted_gset}, with the remaining cases available in Appendix~\ref{app:full_weighted_gset}. Additionally, we evaluate performance on three COLOR benchmark instances: \texttt{anna}, \texttt{david}, and \texttt{huck}.

\textbf{Model Settings.} 
{\texttt{ROS} is designed as a two-layer GNN, with both the input and hidden dimensions set to $100$. To address the issue of gradient vanishing, we apply graph normalization as proposed by \citet{cai2021graphnorm}.
The \texttt{ROS} model is pre-training using Adam with a learning rate of $10^{-2}$ for one epoch.
During fine-tuning, the model is further optimized using the same Adam optimizer and learning rate, applying early stopping with a tolerance of $10^{-2}$ and patience of $100$ iterations. Training terminates if no improvement is observed. Finally, in the random sampling stage, we execute Algorithm \ref{alg:random} for $T=100$ trials and return the best solution. }

\textbf{Evaluation Configuration.} All our experiments were conducted on an NVIDIA RTX 3090 GPU, using PyTorch 2.2.0.

\subsection{{Performance Comparison against Baselines} }

\subsubsection{ {Computational Time}\label{sec:time}}

\begin{figure}[t]
    \centering
    \begin{subfigure}{0.45\textwidth}
        \centering      \includegraphics[width=\textwidth]{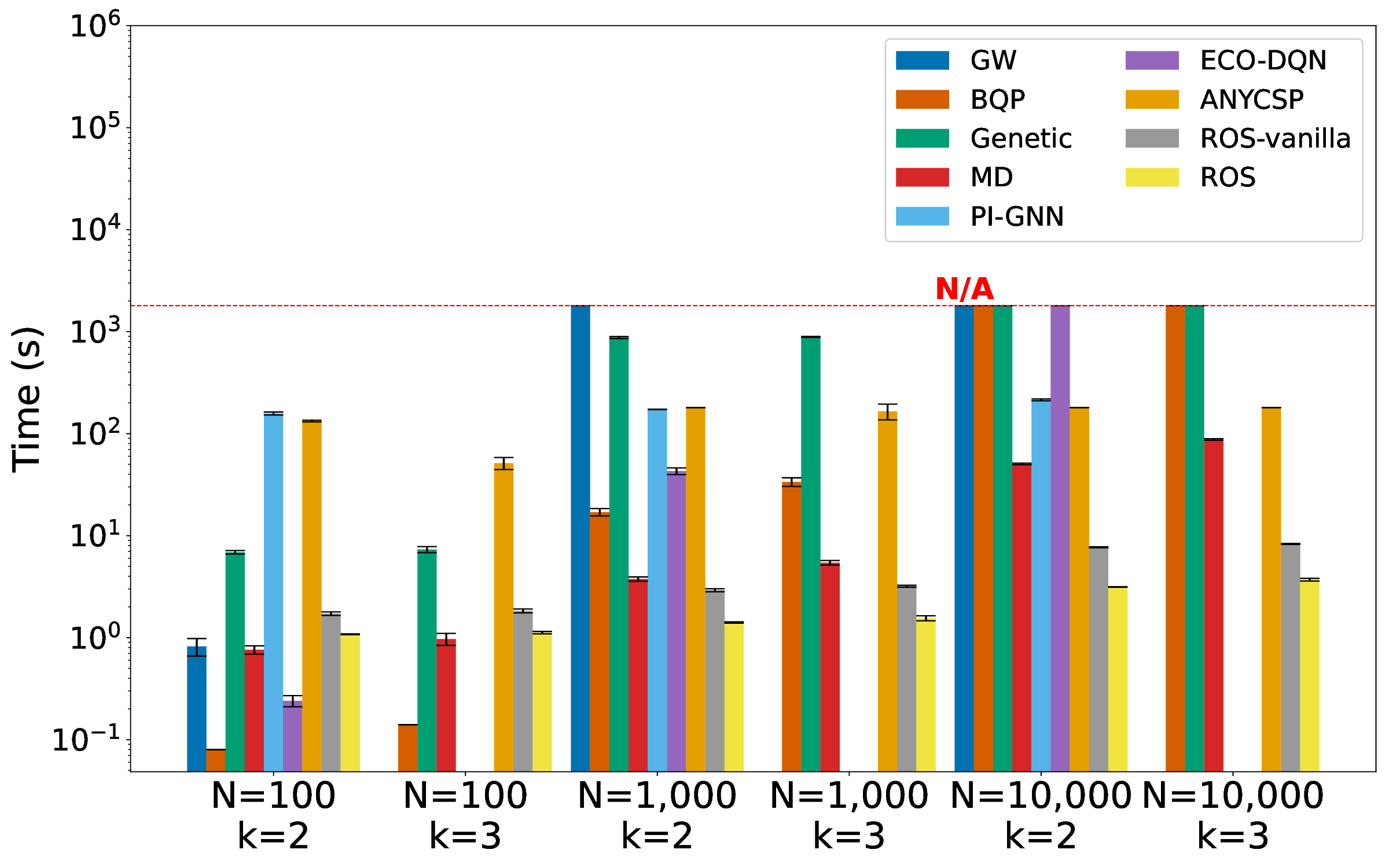}
        \caption{Random regular graph \label{fig:time1}}
    \end{subfigure}
    \hfill
    \begin{subfigure}{0.45\textwidth}
        \centering
\includegraphics[width=\textwidth]{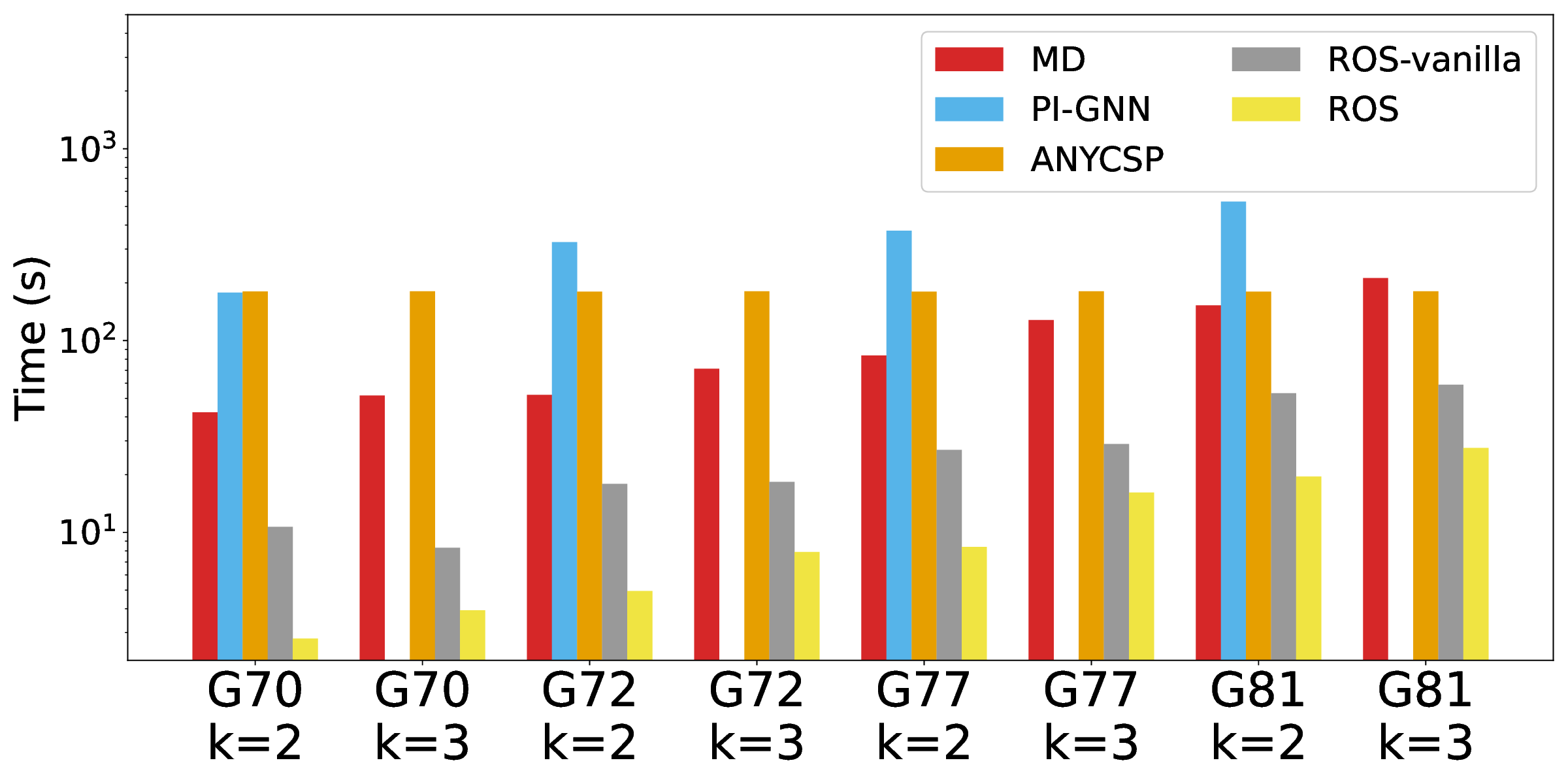}
    \caption{Weighted Gset with perturbation ratio $[0, 10]$\label{fig:time2}}
    \end{subfigure}
    \hfill
    \begin{subfigure}{0.45\textwidth}
        \centering
\includegraphics[width=\textwidth]{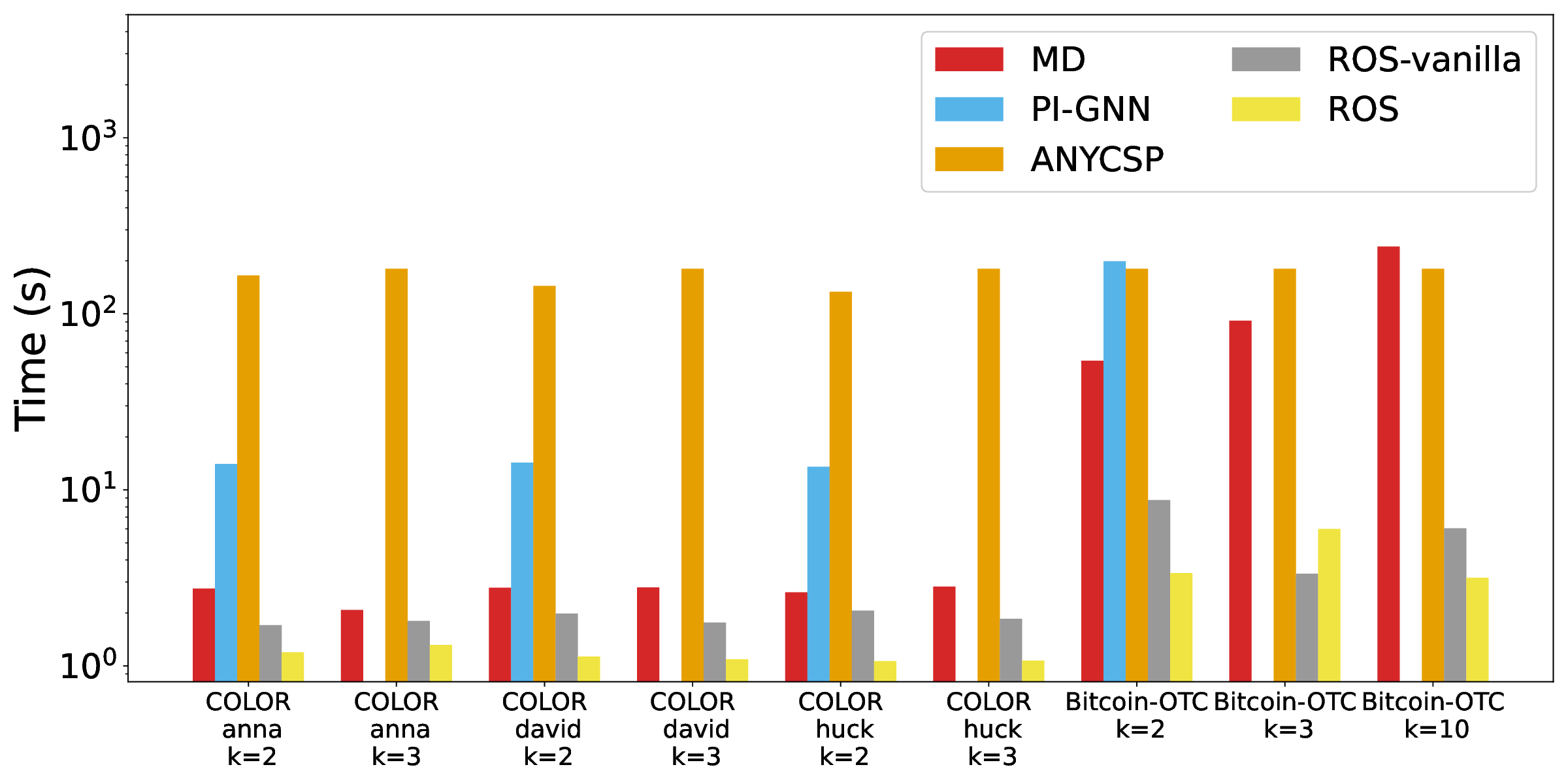}
    \caption{COLOR datasets and Bitcoin-OTC datasets\label{fig:time3}}
    \end{subfigure}
    \caption{The computational time comparison of Max-$k$-Cut problems.}
    \label{fig:time}
\end{figure}

\begin{table*}[t]
\centering
\caption{Cut value comparison of Max-$k$-Cut problems on random regular graphs.}
{\small %
\begin{tabular}{c c@{\hskip 0.05in}c c@{\hskip 0.05in}c c@{\hskip 0.05in}c}
\midrule
\multirow{2}{*}{Methods} & \multicolumn{2}{c}{\texttt{N=100}}            & \multicolumn{2}{c}{\texttt{N=1,000}}              & \multicolumn{2}{c}{\texttt{N=10,000}}              \\ 
 \cmidrule(l){2-3}  \cmidrule(l){4-5}  \cmidrule(l){6-7}
           & $k=2$         & $k=3$   &$k=2$& $k=3$ &$k=2$& $k=3$ \\ \midrule
\texttt{GW}& $130.20_{\pm2.79}$& {\bf --} & N/A   & {\bf --}  & N/A                 & {\bf --}                  \\
\texttt{BQP}& $131.55_{\pm 2.42}$ &$239.70_{\pm 1.82}$&$1324.45_{\pm 6.34}$& $2419.15_{\pm 6.78}$ & N/A  & N/A          \\
\texttt{Genetic}& $127.55_{\pm 2.82}$& $235.50_{\pm 3.15}$& $1136.65_{\pm 10.37}$  & $2130.30_{\pm 8.49}$    & N/A                & N/A                 \\
\texttt{MD}              & $127.20_{\pm 2.16}$  & $235.50_{\pm 3.29}$        & $ 1250.35_{\pm 11.21}$ & $ 2344.85_{\pm 9.86}$       & $12428.85_{\pm 26.13}$ &$23341.20_{\pm 32.87}$   \\ 
\texttt{PI-GNN}          & $122.95_{\pm 3.83}$ & {\bf --}    & $1210.45_{\pm 44.56}$    & {\bf --}   & $12655.05_{\pm 94.25}$  & {\bf --}    \\ 
\texttt{ECO-DQN}          & $135.60_{\pm 1.53}$ & {\bf --}    & $1366.20_{\pm 5.20}$    & {\bf --}   & N/A  & {\bf --}    \\ 
\texttt{ANYCSP}          & $131.65_{\pm 3.35}$ &  $247.90_{\pm 0.89}$   & $1366.05_{\pm 5.25}$    & $2494.50_{\pm 2.99}$   & $13692.35_{\pm 11.27}$  & $24929.80_{\pm 7.53}$    \\ \hline
\texttt{ROS-vanilla} & $132.80_{\pm 1.99}$ & $243.20_{\pm 1.80}$      & $1322.95_{\pm 6.57}$   & $2443.9_{\pm 4.10}$       & $13239.80_{\pm 14.71}$ & $24413.30_{\pm 16.02}$       \\
\texttt{ROS} & $128.20_{\pm 2.82}$ & $240.30_{\pm 2.59}$      & $1283.75_{\pm 6.89}$   & $2405.75_{\pm 5.72}$       & $12856.85_{\pm 26.50}$ & $24085.95_{\pm 21.88}$       \\ \bottomrule
\end{tabular}%
}
\label{tab:reg3}
\end{table*}

\begin{table*}[t]
\centering
\caption{Cut value comparison of Max-$k$-Cut problems on weighted Gset instances, where the noise factor $\sigma\sim[0, 10]$.}
\small{
% \resizebox{\textwidth}{!}{%
\begin{tabular}{p{1.8cm}<{\centering}p{1.1cm}<{\centering}p{1.1cm}<{\centering}p{1.1cm}<{\centering}p{1.1cm}<{\centering}p{1.1cm}<{\centering}p{1.1cm}<{\centering}p{1.1cm}<{\centering}p{1.1cm}<{\centering}}
\midrule
\multirow{2}{*}{Methods}                      & \multicolumn{2}{c}{\texttt{G70 (N=10,000)}}              & \multicolumn{2}{c}{\texttt{G72 (N=10,000)}}                 & \multicolumn{2}{c}{\texttt{G77 (N=14,000)}}                 & \multicolumn{2}{c}{\texttt{G81 (N=20,000)}}                \\\cmidrule(l){2-3}\cmidrule(l){4-5}\cmidrule(l){6-7}\cmidrule(l){8-9}
                           & $k=2$   &$k=3$&$k=2$   &$k=3$& $k=2$   &$k=3$& $k=2$ &$k=3$   \\ \midrule
  \texttt{GW}&N/A&{\bf --}&N/A&{\bf --}&N/A&{\bf --}&N/A&{\bf --}                             \\ 
  \texttt{BQP}&N/A&N/A&N/A&N/A&N/A&N/A&N/A&N/A \\ 
  \texttt{Genetic}&N/A&N/A&N/A&N/A&N/A&N/A&N/A&N/A \\ 
\texttt{MD}&$45490.21$&$49615.85$&$33449.49$&$38798.78$&$47671.94$&$55147.26$&$67403.00$&$78065.07$                           \\ 
\texttt{PI-GNN}&$44275.72$&\textbf{--}&$31469.65$&\textbf{--}&$44359.72$&\textbf{--}&$62439.97$&\textbf{--}                            \\ 
     \texttt{ECO-DQN}&N/A&{\bf --}&N/A&{\bf --}&N/A&{\bf --}&N/A&{\bf --}                             \\ 
\texttt{ANYCSP}  & $46420.48$&$48831.32$& $-280.74$&$-208.01$& $845.72$& $988.96$ & $-13.52$& $271.01$ \\ \midrule
\texttt{ROS-vanilla}&$47140.07$ & $49826.90$ &  $36697.11$ & $42067.80$ & $52226.53$ &  $59636.36$ &$74051.42$& $84498.44$\\
\texttt{ROS}     & $46707.60$&$49813.45$& $35733.11$&$40987.92$& $50790.44$&$58253.31$&    $72057.24$&$82450.68$  \\\midrule
\end{tabular}%
\label{tab:weighted_gset}
}
\end{table*}

We evaluated \texttt{ROS} against seven baseline algorithms: \texttt{GW}, \texttt{BQP}, \texttt{Genetic}, \texttt{MD}, \texttt{PI-GNN}, \texttt{ECO-DQN}, and \texttt{ANYCSP} on random regular graphs, comparing computational time for both Max-Cut and Max-$3$-Cut tasks. Experiments covered three problem scales: $N = 100$, $N = 1{,}000$, and $N = 10{,}000$, with results shown in Figure~\ref{fig:time1}.
For larger instances, Figure~\ref{fig:time2} compares the scalable methods (\texttt{MD}, \texttt{ANYCSP}, and \texttt{PI-GNN}) on weighted Gset graphs ($N \geq 10{,}000$) with edge weight perturbations in $[0, 10]$. Figure~\ref{fig:time3} extends this comparison to real-world networks (COLOR and Bitcoin-OTC graphs). 
Instances marked \enquote{N/A} indicate timeout failures (30-minute limit). Complete results for unweighted Gset benchmarks, including comparisons with state-of-the-art methods \texttt{LPI} and \texttt{MOH}, are provided in Tables~\ref{tab:3} and~\ref{tab:4} (Appendix~\ref{app:gset}).

The results depicted in Figure~\ref{fig:time1} indicate that \texttt{ROS} efficiently solves all problem instances within seconds, even for large problem sizes of \( N = 10,000 \). 
In terms of baseline performance, the approximation algorithm \texttt{GW} performs efficiently on instances with \( N = 100 \), but it struggles with larger sizes due to the substantial computational burden associated with solving the underlying semi-definite programming problem. 
Heuristic methods such as \texttt{BQP} and \texttt{Genetic} can manage cases up to \( N = 1,000 \) in a few hundred seconds, yet they fail to solve larger instances with \( N = 10,000 \) because of the high computational cost of each iteration. 
Notably, \texttt{MD} is the only traditional method capable of solving large instances within a reasonable time frame; however,  when \( N \) reaches \( 10,000 \), the computational time for \texttt{MD} approaches $15$ times that of \texttt{ROS}. 
Regarding L2O methods, \texttt{PI-GNN} necessitates retraining and prediction for each instance, with test times exceeding dozens of seconds even for \( N = 100 \). 
\texttt{ECO-DQN} relies on expensive GNNs at each decision step and can not scale to large problem sizes of $N=10,000$. 
\texttt{ANYCSP} needs hundreds of seconds even for $N=100$ due to the global search operation and long sampling trajectory. 
In contrast, \texttt{ROS} solves these large instances in merely a few seconds throughout the experiments, requiring only \( 10\% \) of the computational time utilized by other L2O baselines. 
Figure~\ref{fig:time2} and Figure~\ref{fig:time3} illustrate the results for the weighted Gset benchmark and real-world datasets, respectively, where \texttt{ROS} efficiently solves the largest instances in just a few seconds, while other methods take tens to hundreds of seconds for equivalent tasks. Remarkably, \texttt{ROS} utilizes only about \( 1\% \) of the computational time required by \texttt{PI-GNN}.

\begin{table*}[h]
\centering
\caption{Cut value comparison of Max-$k$-Cut problems on COLOR datasets and Bitcoin-OTC Datasets.}
\begin{tabular}{cccccccccc}
\midrule
\multirow{2}{*}{Methods}                      & \multicolumn{2}{c}{COLOR anna} & \multicolumn{2}{c}{COLOR david}& \multicolumn{2}{c}{COLOR huck}   & \multicolumn{3}{c}{Bitcoin-OTC}                \\\cmidrule(l){2-7}\cmidrule(l){8-10}
                           & $k=2$   &$k=3$&$k=2$   &$k=3$& $k=2$   &$k=3$& $k=2$ &$k=3$ & $k=10$ \\ \midrule
\texttt{MD}& $339$        & $421$ & $259$        & $329$        & $184$        & $242$&$39076$&$47595$ &$53563$\\ 
\texttt{PI-GNN}&$279$&\textbf{--}&$228$&\textbf{--}&$166$&\textbf{--}&$37216$&\textbf{--}&\textbf{--} \\ 
\texttt{ANYCSP}  & $330$&$423$& $263$&$328$& $166$& $139$ & $10431$& $14265$ & $19372$\\ \midrule
\texttt{ROS-vanilla}&$351$ & $429$ &  $266$ & $336$ & $191$ &  $246$ &$40576$& $48214$ & $53758$\\
\texttt{ROS}     & $351$&$423$& $266$&$324$& $191$&$242$&    $39850$&$48980$ & $53778$ \\\midrule
\end{tabular}
\label{tab:real_world}
\end{table*}

\subsubsection{Cut Value \label{exp:obj}}

% \texttt{ANYCSP}  & $5198.87_{\pm69.76}$&  & $-15.57_{\pm57.88}$& & $81.76_{\pm69.97}$&    & $33.49_{\pm50.31}$&  \\ 
% \texttt{ROS}     & $\mathbf{8941.80_{\pm17.79}}$&  & $\mathbf{6165.62_{\pm50.81}}$&  & $\mathbf{8737.59_{\pm114.24}}$&    & $\mathbf{12325.85_{\pm87.98}}$& 

We evaluate \texttt{ROS}'s performance on random regular graphs, the Gset benchmark, and real-world datasets, measuring solution quality for Problem~(\ref{eq:max-k-cut_formulation}). Results appear in Tables~\ref{tab:reg3} (random graphs), \ref{tab:weighted_gset} (weighted Gset), and \ref{tab:real_world} (real-world data), where \enquote{\bfseries --} denotes methods incompatible with Max-$k$-Cut problems.

The results demonstrate that \texttt{ROS} consistently produces high-quality solutions for both $k = 2$ and $k = 3$ across all scales. While \texttt{GW} performs well for Max-Cut ($k=2$) at $N=100$, it fails to generalize to arbitrary $k$. Traditional methods like \texttt{BQP} and \texttt{Genetic} support $k=3$ but often converge to suboptimal solutions. Although \texttt{MD} handles general $k$, it consistently underperforms \texttt{ROS}. Among learning-based methods, \texttt{PI-GNN} proves unsuitable for $k=3$ due to QUBO incompatibility and unreliable heuristic rounding, while \texttt{ECO-DQN} lacks $k=3$ support entirely. While \texttt{ANYCSP} achieves strong results on unweighted graphs, it cannot process weighted instances.
These experiments collectively show that \texttt{ROS} offers superior generalizability and robustness for weighted Max-$k$-Cut tasks, outperforming both traditional and learning-based approaches in solution quality and flexibility.

To further assess \texttt{ROS}'s scalability, we conduct comprehensive benchmarking against scalable baselines using challenging real-world datasets, including the COLOR and Bitcoin-OTC networks. The results in Table~\ref{tab:real_world} demonstrate that both \texttt{ROS} and its simplified variant \texttt{ROS-vanilla} consistently outperform competing methods across most experimental settings. This performance advantage is particularly pronounced for the weighted Bitcoin-OTC instances, where our approach achieves superior solution quality while maintaining computational efficiency.

\subsection{Effect of the \enquote{Pre-train} Stage in \texttt{ROS}}

To evaluate the impact of the pre-training stage in \texttt{ROS}, we compared it with \texttt{ROS-vanilla}, which omits pre-training (see Section~\ref{sec:GNN_optimization}). We assessed both methods based on cut values and computational time. Figure~\ref{fig:ratio} illustrates the ratios of these metrics between \texttt{ROS-vanilla} and \texttt{ROS}. In this figure, the horizontal axis represents the problem instances, while the left vertical axis (green bars) displays the ratio of objective function values, and the right vertical axis (red curve) indicates the ratio of computational times.

As shown in Figure~\ref{fig:ratio1}, \texttt{ROS-vanilla} achieves higher objective function values in most settings on the random regular graphs; however, its computational time is approximately \(1.5\) times greater than that of \texttt{ROS}. Thus, \texttt{ROS} demonstrates a faster solving speed compared to \texttt{ROS-vanilla}. Similarly, in experiments conducted on the Gset benchmark (Figure~\ref{fig:ratio2}), \texttt{ROS} reduces computational time by around \(40\%\) while maintaining performance comparable to that of \texttt{ROS-vanilla}. Notably, in the Max-3-Cut problem for the largest instance, \texttt{G81}, \texttt{ROS} effectively halves the solving time, showcasing the significant acceleration effect of pre-training. It is worth mentioning that the \texttt{ROS} model was pre-trained on random regular graphs with \(N = 100\) and generalized well to regular graphs with \(N = 1{,}000\) and \(N = 10{,}000\), as well as to Gset problem instances of varying sizes and types. This illustrates \texttt{ROS}'s capability to generalize and accelerate the solving of large-scale problems across diverse graph types and sizes, emphasizing the strong out-of-distribution generalization afforded by pre-training.

In summary, while \texttt{ROS-vanilla} achieves slightly higher objective function values on individual instances, it requires longer solving times and struggles to generalize to other problem instances. This observation highlights the trade-off between a model's ability to generalize and its capacity to fit specific instances. Specifically, a model that fits individual instances exceptionally well may fail to generalize to new data, resulting in longer solving times. Conversely, a model that generalizes effectively may exhibit slightly weaker performance on specific instances, leading to a marginal decrease in objective function values. Therefore, the choice between these two training modes should be guided by the specific requirements of the application.

\begin{figure}[t]
    \centering
    \begin{subfigure}{0.43\textwidth} % 确保单位为 \linewidth
        \centering
        \includegraphics[width=\textwidth]{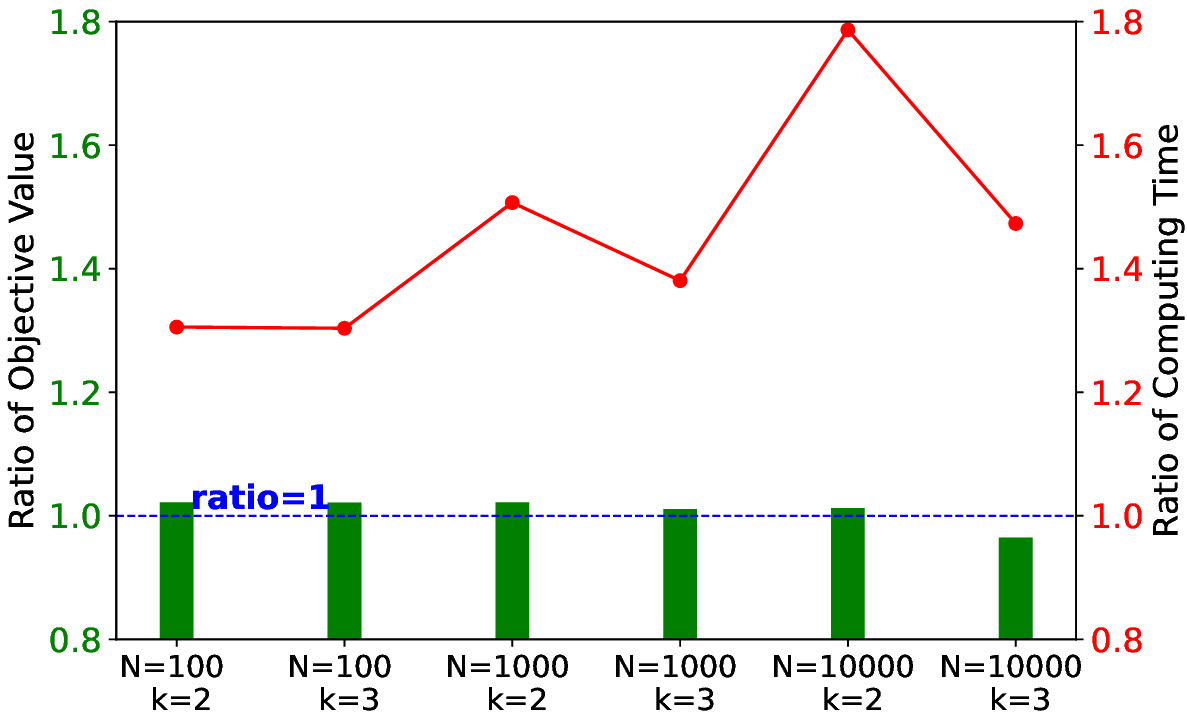}
        \caption{Random regular graph \label{fig:ratio1}}
    \end{subfigure}
    \hfill
    \begin{subfigure}{0.43\textwidth} % 确保单位为 \linewidth
        \centering
        \includegraphics[width=\textwidth]{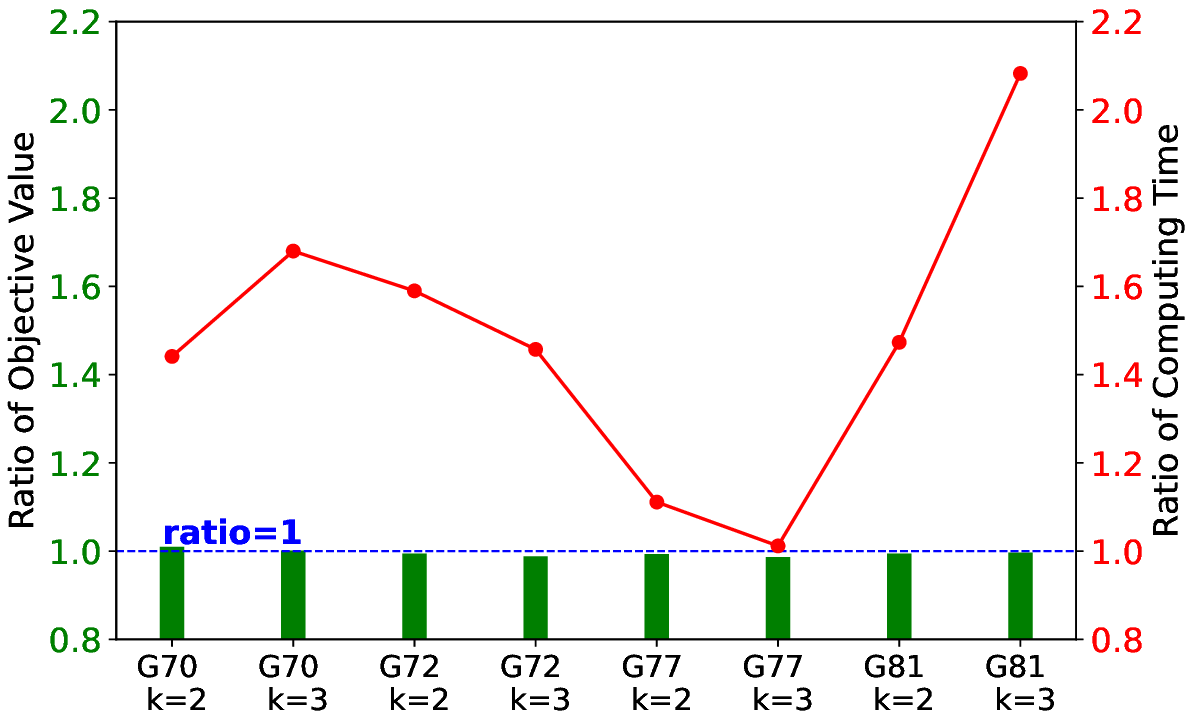}
        \caption{Gset \label{fig:ratio2}}
    \end{subfigure}
    \caption{The ratio of computational time and cut value comparison between $\texttt{ROS-vanilla}$ and $\texttt{ROS}$.}
    \label{fig:ratio}
\end{figure}

\section{Conclusions}

In this paper, we propose ROS, an efficient method for addressing the Max-$k$-Cut problem with arbitrary edge weights. 
Our approach begins by relaxing the constraints of the original discrete problem to probabilistic simplices. 
To effectively solve this relaxed problem, we propose an optimization algorithm based on GNN parametrization and incorporate transfer learning by leveraging pre-trained GNNs to warm-start the training process. 
After resolving the relaxed problem, we present a novel random sampling algorithm that maps the continuous solution back to a discrete form. By integrating geometric landscape analysis with statistical theory, we establish the consistency of function values between the continuous and discrete solutions. 
Experiments conducted on random regular graphs, the Gset benchmark, and real-world datasets demonstrate that our method is highly efficient for solving large-scale Max-$k$-Cut problems, requiring only a few seconds, even for instances with tens of thousands of variables. Furthermore, it exhibits robust generalization capabilities across both in-distribution and out-of-distribution instances, highlighting its effectiveness for large-scale optimization tasks.
Exploring other sampling algorithms to further boost ROS performance is a future research direction. Moreover, the ROS framework with theoretical insights could be potentially extended to other graph-related combinatorial problems, and this direction is also worth investigating as future work.

%\newpage

\section*{Impact Statement}
This paper presents work whose goal is to advance the field of Machine Learning. There are many potential societal consequences of our work, none of which we feel must be specifically highlighted here.

\section*{Acknowledgement}
This work was supported by the National Key R\&D Program of China under grant 2022YFA1003900. Ye Xue  acknowledges support from the National Natural Science Foundation of China (Grant No. 62301334), the Guangdong Major Project of Basic and
Applied Basic Research (No. 2023B0303000001), Akang Wang also acknowledges support from the National Natural Science Foundation of China (Grant No. 12301416), the Guangdong Basic and Applied Basic Research Foundation (Grant No. 2024A1515010306).

% In the unusual situation where you want a paper to appear in the
% references without citing it in the main text, use \nocite

\bibliography{example_paper}
\bibliographystyle{icml2025}

%%%%%%%%%%%%%%%%%%%%%%%%%%%%%%%%%%%%%%%%%%%%%%%%%%%%%%%%%%%%%%%%%%%%%%%%%%%%%%%
%%%%%%%%%%%%%%%%%%%%%%%%%%%%%%%%%%%%%%%%%%%%%%%%%%%%%%%%%%%%%%%%%%%%%%%%%%%%%%%
% APPENDIX
%%%%%%%%%%%%%%%%%%%%%%%%%%%%%%%%%%%%%%%%%%%%%%%%%%%%%%%%%%%%%%%%%%%%%%%%%%%%%%%
%%%%%%%%%%%%%%%%%%%%%%%%%%%%%%%%%%%%%%%%%%%%%%%%%%%%%%%%%%%%%%%%%%%%%%%%%%%%%%%
\newpage
\appendix
\onecolumn

\section{Related Works}

Relaxation-based methods have been central to the algorithmic design for Max-Cut and its generalizations. In Table~\ref{tab:relaxation}, we compare our proposed probability simplex relaxation with several representative approaches along key dimensions: variable complexity (\# Var.), applicability to general Max-$k$-Cut, polynomial-time solvability, objective value consistency with the original problem, and scalability to large instances.

\begin{table}[ht]
\centering
\caption{Comparison between Different Relaxations \label{tab:relaxation}}
\resizebox{\textwidth}{!}{%
\begin{tabular}{lccccc}
\toprule
\textbf{Relaxation} & \textbf{\# Var.} & \textbf{Max-$k$-Cut} & \textbf{Polynomial Solvable?} & \textbf{Obj. Value Consistency?} & \textbf{Scalable?} \\
\midrule
Lovasz Extension~\cite{bach2013learning} & $\mathcal{O}(N)$ & \XSolidBrush & \XSolidBrush & \Checkmark & \Checkmark \\
SDP Relaxation~\cite{goemans1995improved} & $\mathcal{O}(N^2)$ & \XSolidBrush & \Checkmark & \XSolidBrush & \XSolidBrush \\
SDP Relaxation~\cite{frieze1997improved} & $\mathcal{O}(N \times k)$ & \Checkmark & \Checkmark & \XSolidBrush & \XSolidBrush \\
Rank-2 Relaxation~\cite{burer2002rank} & $\mathcal{O}(N)$ & \XSolidBrush & \XSolidBrush & \XSolidBrush & \Checkmark \\
QUBO Relaxation~\cite{andrade2022physical} & $\mathcal{O}(N)$ & \XSolidBrush & \XSolidBrush & \XSolidBrush & \Checkmark \\
Probability Simplex Relaxation (ours) & $\mathcal{O}(N \times k)$ & \Checkmark & \XSolidBrush & \Checkmark & \Checkmark \\
\bottomrule
\end{tabular}
}
\end{table}

The Lovász extension~\cite{bach2013learning}, originally designed for submodular optimization, admits scalable convex formulations but does not extend naturally to general Max-$k$-Cut problems. Seminal SDP-based methods, such as Goemans-Williamson~\cite{goemans1995improved} for Max-Cut and its $k$-way extension~\cite{frieze1997improved}, offer polynomial-time approximation guarantees. However, their reliance on large-scale semidefinite programming limits practical scalability and makes them less effective on modern large-scale instances. Non-convex formulations, including the rank-2 relaxation~\cite{burer2002rank} and QUBO-based relaxation~\cite{andrade2022physical}, provide scalable alternatives for Max-Cut but lack theoretical guarantees for Max-$k$-Cut and are typically solved locally. These methods often exhibit poor objective consistency and limited generalization.

In contrast, our probability simplex relaxation introduces a non-convex yet tractable formulation for Max-$k$-Cut with $\mathcal{O}(N \times k)$ variables. While it is not globally solvable in polynomial time, its optimal value aligns exactly with that of the original Max-$k$-Cut problem. Empirically, our GNN-based solver produces high-quality fractional solutions, which serve as effective initializations for randomized sampling. Overall, the proposed relaxation strikes a favorable balance between expressiveness, consistency, and scalability, offering a practical and theoretically grounded solution framework for large-scale Max-$k$-Cut problems.

\section{Proof of Theorem \ref{theorem:local_optimal_neighborhood}}

\begin{proof}

Before proceeding with the proof of Theorem \ref{theorem:local_optimal_neighborhood}, we first define the neighborhood of a vector $\bar{\bm{x}} \in \Delta_k$, and establish results of Lemma \ref{lemma1} and Lemma \ref{lemma:prepare_local_optimal_neighborhood}.

\begin{definition} 
Let $\bar{\bm{x}}=(\bar{\bm{x}}_1,\cdots,\bar{\bm{x}}_k)$ denote a point in $\Delta_k$. We define the neighborhood induced by $\bar{\bm{x}}$ as follows:
\begin{equation*}
    \begin{aligned}
        \widetilde{\mathcal{N}}(\bar{\bm{x}})\coloneqq \left\{ (\bm{x}_1,\cdots,\bm{x}_k)\in\Delta_k \left|\sum_{j\in \mathcal{K}(\bar{\bm{x}})}\bm{x}_j=1 \right. \right\},
    \end{aligned}
\end{equation*}
where $\mathcal{K}(\bar{\bm{x}})=\left\{j\in\{1,\cdots,k\}\mid\bar{\bm{x}}_j>0\right\}$.
\end{definition}

\begin{lemma}
    \label{lemma1}
    Given $\bm{X}_{\cdot i}\in \widetilde{\mathcal{N}}(\bm{X}^{\star}_{\cdot i})$, it follows that 
    \begin{equation*}
        \begin{aligned}
            \mathcal{K}(\bm{X}_{\cdot i})\subseteq\mathcal{K}(\bm{X}^{\star}_{\cdot i}).
        \end{aligned}
    \end{equation*}
\end{lemma}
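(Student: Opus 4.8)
The plan is to derive the support-containment directly from the two normalization conditions that membership in $\widetilde{\mathcal{N}}(\bm{X}^{\star}_{\cdot i})$ imposes, combined with the non-negativity built into the simplex $\Delta_k$. No geometric machinery is needed: the claim is a one-line consequence of comparing a full sum with a partial sum over a subset of indices.

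First I would record the two facts that are available. Since $\bm{X}_{\cdot i} \in \Delta_k$, every entry satisfies $\bm{X}_{\ell i} \geq 0$ for $\ell \in \{1, \ldots, k\}$ and the full column normalizes, $\sum_{\ell=1}^{k} \bm{X}_{\ell i} = 1$. On the other hand, the defining constraint of the neighborhood $\widetilde{\mathcal{N}}(\bm{X}^{\star}_{\cdot i})$ supplies the partial normalization $\sum_{\ell \in \mathcal{K}(\bm{X}^{\star}_{\cdot i})} \bm{X}_{\ell i} = 1$, where $\mathcal{K}(\bm{X}^{\star}_{\cdot i})$ is the support (the set of row indices on which the entry is strictly positive) of the $i$-th column of $\bm{X}^{\star}$.

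Next I would subtract the partial sum from the full sum, obtaining $\sum_{\ell \notin \mathcal{K}(\bm{X}^{\star}_{\cdot i})} \bm{X}_{\ell i} = 0$. Because each summand is non-negative, a vanishing sum of non-negative terms forces every term to vanish, so $\bm{X}_{\ell i} = 0$ for all $\ell \notin \mathcal{K}(\bm{X}^{\star}_{\cdot i})$. Contrapositively, any index $\ell$ with $\bm{X}_{\ell i} > 0$ must lie in $\mathcal{K}(\bm{X}^{\star}_{\cdot i})$, which is precisely the assertion $\mathcal{K}(\bm{X}_{\cdot i}) \subseteq \mathcal{K}(\bm{X}^{\star}_{\cdot i})$.

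There is no substantive obstacle to overcome; the only point requiring care is the index bookkeeping, namely that $\mathcal{K}$ applied to a column returns a set of row indices and that the neighborhood constraint says exactly that the mass of $\bm{X}_{\cdot i}$ concentrated on the support of $\bm{X}^{\star}_{\cdot i}$ already accounts for all of its unit mass. This lemma then serves as the base fact that the free coordinates of any neighbor stay confined to the support of $\bm{X}^{\star}_{\cdot i}$, which is what Lemma~\ref{lemma:prepare_local_optimal_neighborhood} and ultimately Theorem~\ref{theorem:local_optimal_neighborhood} build upon.
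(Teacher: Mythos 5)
Your proof is correct and uses essentially the same idea as the paper's: both compare the full column sum $\sum_{\ell=1}^{k}\bm{X}_{\ell i}=1$ with the partial sum over $\mathcal{K}(\bm{X}^{\star}_{\cdot i})$ and invoke non-negativity of the simplex entries. The paper merely phrases this as a proof by contradiction (assuming some $\bm{X}_{ji}>0$ outside the support and deriving a partial sum strictly less than $1$), whereas you argue directly that the leftover mass is zero; the two are logically identical.
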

\begin{proof}
    Suppose there exists $j\in\mathcal{K}(\bm{X}_{\cdot i})$ such that $j\notin\mathcal{K}(\bm{X}^{\star}_{\cdot i})$, implying $\bm{X}_{ji}>0$ and $\bm{X}^{\star}_{ji}=0$.

    We then have
    \begin{equation*}
        \begin{aligned}
            \sum_{l\in\mathcal{K}(\bm{X}^{\star}_{\cdot i})} \bm{X}_{li}+\bm{X}_{ji}\le \sum_{l=1}^{k}\bm{X}_{li}=1,
        \end{aligned}
    \end{equation*}
    which leads to
    \begin{equation*}
        \begin{aligned}
            \sum_{l\in\mathcal{K}(\bm{X}^{\star}_{\cdot i})} \bm{X}_{li}\le 1-\bm{X}_{ji}<1,
        \end{aligned}
    \end{equation*}
    contradicting with the fact that $\bm{X}_{\cdot i} \in \widetilde{\mathcal{N}}(\bm{X}^{\star}_{\cdot i})$.
\end{proof}

\begin{lemma}
    Let $\bm{X}^{\star}$ be a globally optimal solution to $\overline{\textbf{P}}$, then
    \begin{equation*}
        f(\bm{X};\bm{W})=f(\bm{X}^{\star};\bm{W}),
    \end{equation*}
    where $\bm{X}$ has only the $i^{th}$ column $\bm{X}_{\cdot i}\in\widetilde{\mathcal{N}}(\bm{X}^{\star}_{\cdot i})$, and other columns are identical to those of $\bm{X}^{\star}$. Moreover, $\bm{X}$ is also a globally optimal solution to $\bar{\bm{P}}$. 
    %where $\bm{X}=[\bar{\bm{x}}_1,\cdots,\bar{\bm{x}}_{i-1},\bm{x}_i,\bar{\bm{x}}_{i+1},\cdots,\bar{\bm{x}}_N]$ for $i\in\mathcal{V}$, and $\bm{x}_i\in\tilde{\mathcal{N}}(\bar{\bm{x}}_i)$. 
    \label{lemma:prepare_local_optimal_neighborhood}
\end{lemma}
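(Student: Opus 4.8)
The plan is to exploit the observation that, when only a single column of $\bm{X}$ is allowed to vary, the objective $f(\bm{X};\bm{W})=\Tr(\bm{X}\bm{W}\bm{X}^\top)$ collapses to an \emph{affine} function of that column. First I would expand
\[
f(\bm{X};\bm{W})=\sum_{p=1}^N\sum_{q=1}^N \bm{W}_{pq}\,\bm{X}_{\cdot p}^\top\bm{X}_{\cdot q}
\]
and isolate the terms that involve column $i$. Because $\bm{W}$ is symmetric with zero diagonal, the quadratic self-term $\bm{W}_{ii}\,\bm{X}_{\cdot i}^\top\bm{X}_{\cdot i}$ vanishes, and grouping the $p=i$ and $q=i$ contributions yields
\[
f(\bm{X};\bm{W})=c+2\,\langle \bm{X}_{\cdot i},\,\bm{v}\rangle,\qquad \bm{v}\coloneqq\sum_{q\neq i}\bm{W}_{iq}\,\bm{X}^{\star}_{\cdot q},
\]
where $c$ collects all terms independent of column $i$; these are constants here because every column other than $i$ coincides with that of $\bm{X}^{\star}$. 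The structural crux is that the reduced objective is \emph{linear} in $\bm{X}_{\cdot i}$, not merely convex.

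Next I would identify $\widetilde{\mathcal{N}}(\bm{X}^{\star}_{\cdot i})$ as precisely the sub-simplex of probability vectors supported on $\mathcal{K}(\bm{X}^{\star}_{\cdot i})$, and note that $\bm{X}^{\star}_{\cdot i}$ lies in its \emph{relative interior}, since by definition of $\mathcal{K}$ every coordinate indexed by $\mathcal{K}(\bm{X}^{\star}_{\cdot i})$ is strictly positive. Replacing column $i$ of $\bm{X}^{\star}$ by any $\bm{X}_{\cdot i}\in\widetilde{\mathcal{N}}(\bm{X}^{\star}_{\cdot i})\subseteq\Delta_k$ produces a feasible point of $\overline{\textbf{P}}$, so by global optimality of $\bm{X}^{\star}$ the vector $\bm{X}^{\star}_{\cdot i}$ minimizes the linear functional $\bm{x}\mapsto\langle\bm{x},\bm{v}\rangle$ over $\widetilde{\mathcal{N}}(\bm{X}^{\star}_{\cdot i})$.

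The final step is a standard relative-interior argument: a linear functional attaining its minimum over a convex set at a relative-interior point must be constant on that set. Concretely, for any $\bm{y}\in\widetilde{\mathcal{N}}(\bm{X}^{\star}_{\cdot i})$, the relative-interior position of $\bm{X}^{\star}_{\cdot i}$ supplies an $\eps>0$ with $\bm{X}^{\star}_{\cdot i}-\eps(\bm{y}-\bm{X}^{\star}_{\cdot i})\in\widetilde{\mathcal{N}}(\bm{X}^{\star}_{\cdot i})$; evaluating the minimality inequality at both $\bm{y}$ and this reflected point forces $\langle\bm{y},\bm{v}\rangle=\langle\bm{X}^{\star}_{\cdot i},\bm{v}\rangle$. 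Hence $f(\bm{X};\bm{W})=f(\bm{X}^{\star};\bm{W})$ for every admissible $\bm{X}$, and since this common value equals the global minimum, each such $\bm{X}$ is itself globally optimal for $\overline{\textbf{P}}$.

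I expect the only real obstacle to be the bookkeeping in the first step: correctly collecting the column-$i$ terms and verifying that the zero-diagonal assumption annihilates the quadratic term, so that the reduced objective is genuinely affine rather than quadratic. Once linearity is established, the optimization argument is routine; the one point deserving care is confirming that $\bm{X}^{\star}_{\cdot i}$ sits in the relative interior of $\widetilde{\mathcal{N}}(\bm{X}^{\star}_{\cdot i})$ rather than on a lower-dimensional face, which holds by the very definition of the support set $\mathcal{K}(\bm{X}^{\star}_{\cdot i})$.
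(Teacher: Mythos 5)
Your proposal is correct and rests on the same structural decomposition as the paper's proof: isolating the column-$i$ terms, using the zero diagonal of $\bm{W}$ to reduce the objective to an affine function $c+2\langle\bm{X}_{\cdot i},\bm{v}\rangle$ of that column (the paper writes this as $g(\bm{X}_{\cdot i};\bm{X}_{\cdot -i})=2\bm{X}_{\cdot i}^\top\bm{Y}_{\cdot i}$ with $\bm{Y}_{\cdot i}=\sum_{j\neq i}\bm{W}_{ij}\bm{X}_{\cdot j}$), and then exploiting global optimality via feasible perturbations. Where you diverge is in the finishing step. The paper argues coordinatewise: for each pair $j,l\in\mathcal{K}(\bm{X}^{\star}_{\cdot i})$ it perturbs the two entries by $\pm\epsilon$ in both directions to conclude $\bm{Y}_{ji}=\bm{Y}_{li}$, so all coefficients of the linear functional are equal to a common value $t$ on the support; it then needs a separate support-inclusion lemma ($\mathcal{K}(\bm{X}_{\cdot i})\subseteq\mathcal{K}(\bm{X}^{\star}_{\cdot i})$ for $\bm{X}_{\cdot i}\in\widetilde{\mathcal{N}}(\bm{X}^{\star}_{\cdot i})$) to compute $g=2t$ at both points. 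You instead invoke the general fact that a linear functional minimized at a relative-interior point of a convex set is constant on that set, via the reflection $\bm{X}^{\star}_{\cdot i}-\eps(\bm{y}-\bm{X}^{\star}_{\cdot i})$. Your route is slightly cleaner: it handles the $|\mathcal{K}(\bm{X}^{\star}_{\cdot i})|=1$ and $|\mathcal{K}(\bm{X}^{\star}_{\cdot i})|>1$ cases uniformly and absorbs the paper's Lemma~\ref{lemma1} into the single observation that $\widetilde{\mathcal{N}}(\bm{X}^{\star}_{\cdot i})$ is exactly the sub-simplex supported on $\mathcal{K}(\bm{X}^{\star}_{\cdot i})$ with $\bm{X}^{\star}_{\cdot i}$ in its relative interior. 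The paper's version, in exchange, produces the explicit gradient-level information $\bm{Y}_{j_1 i}=\cdots=t$, which makes the common value of $g$ concrete; but for the purposes of this lemma and of Theorem~\ref{theorem:local_optimal_neighborhood} the two arguments deliver the same conclusion.
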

\begin{proof}

    The fact that $\bm{X}$ is a globally optimal solution to $\bar{\bm{P}}$ follows directly from the equality $f(\bm{X};\bm{W})=f(\bm{X}^{\star};\bm{W})$. Thus, it suffices to prove this equality. 
    Consider that $\bm{X}^{\star}$ and $\bm{X}$ differ only in the $i^{th}$ column, and $\bm{X}_{\cdot i}\in \widetilde{\mathcal{N}}(\bm{X}^{\star}_{\cdot i})$. We can rewrite the objective value function as
    \begin{equation*}
        \begin{aligned}
            f(\bm{X};\bm{W}) = g(\bm{X}_{\cdot i};\bm{X}_{\cdot -i}) + h(\bm{X}_{\cdot -i}),
        \end{aligned}
    \end{equation*}
    where $\bm{X}_{\cdot -i}$ represents all column vectors of $\bm{X}$ except the $i^{th}$ column. The functions $g$ and $h$ are defined as follows:
    \begin{equation*}
        \begin{aligned}
            g(\bm{X}_{\cdot i};\bm{X}_{\cdot -i})&=\sum_{j=1}^N\bm{W}_{ij}\bm{X}_{\cdot i}^\top\bm{X}_{\cdot j}+\sum_{j=1}^N\bm{W}_{ji}\bm{X}_{\cdot j}^\top\bm{X}_{\cdot i} - \bm{W}_{ii}\bm{X}_{\cdot i}^\top\bm{X}_{\cdot i}, \\
            h(\bm{X}_{\cdot -i})&=\sum_{l=1,l\neq i}^N\sum_{j=1,j\neq i}^N\bm{W}_{lj}\bm{X}_{\cdot l}^\top\bm{X}_{\cdot j}
        \end{aligned}
    \end{equation*}
    To establish that $f(\bm{X};\bm{W})=f(\bm{X}^{\star};\bm{W})$, it suffices to show that
    \begin{equation*}
        g(\bm{X}_{\cdot i};\bm{X}_{\cdot -i}) = g(\bm{X}^{\star}_{\cdot i};\bm{X}_{\cdot -i})
    \end{equation*}
    as $\bm{X}_{\cdot -i}=\bm{X}^{\star}_{\cdot -i}$.

    Rewriting $g(\bm{X}_{\cdot i};\bm{X}_{\cdot -i})$, we obtain
    \begin{equation*}
        \begin{aligned}
            g(\bm{X}_{\cdot i};\bm{X}_{\cdot -i})&=\sum_{j=1}^N \bm{W}_{ij}\bm{X}_{\cdot i}^\top\bm{X}_{\cdot j} + \sum_{j=1}^N \bm{W}_{ji}\bm{X}_{\cdot j}^\top\bm{X}_{\cdot i}  \\
            & = 2 \sum_{j=1}^N\bm{W}_{ij}\bm{X}_{\cdot i}^\top \bm{X}_{\cdot j} \\
            & = 2 \bm{X}_{\cdot i}^\top\sum_{j=1,j\neq i}^{N} \bm{W}_{ij} \bm{X}_{\cdot j} \\
            & = 2 \bm{X}_{\cdot i}^\top\bm{Y}_{\cdot i},
        \end{aligned}
    \end{equation*}
    where $\bm{Y}_{\cdot i}:=\sum_{j=1, j\neq i}^{N}\bm{W}_{ij}\bm{X}_{\cdot j}$.

    If $\vert\mathcal{K}(\bm{X}^{\star}_{\cdot i})\vert=1$, then there is only one non-zero element in $\bm{X}^{\star}_{\cdot i}$ equal to one. Therefore, $g(\bm{X}^{\star}_{\cdot i};\bm{X}_{\cdot -i})=g(\bm{X}_{\cdot i};\bm{X}_{\cdot -i})$ since $\bm{X}_{\cdot i}=\bm{X}^{\star}_{\cdot i}$.

    For the case where $\vert\mathcal{K}(\bm{X}^{\star}_{\cdot i})\vert>1$, we consider any indices $j, l\in\mathcal{K}(\bm{X}^{\star}_{\cdot i})$ such that $\bm{X}^{\star}_{ji}, \bm{X}^{\star}_{li}\in(0, 1)$. Then, there exists $\epsilon>0$ such that we can construct a point $\widetilde{\bm{x}}\in\Delta_k$ where the $j^{th}$ element is set to $\bm{X}^{\star}_{ji}-\epsilon$, the $l^{th}$ element is set to $\bm{X}^{\star}_{li}+\epsilon$, and all other elements remain the same as in $\bm{X}^{\star}_{\cdot i}$. Since $\bm{X}^{\star}$ is a globally optimum of the function $f(\bm{X};\bm{W})$, it follows that $\bm{X}^{\star}_{\cdot i}$ is also a global optimum for the function $g(\bm{X}^{\star}_{\cdot i};\bm{X}_{\cdot -i})$. Thus, we have
    \begin{equation*}
        \begin{aligned}
            g(\bm{X}^{\star}_{\cdot i};\bm{X}_{\cdot -i})&\le g(\widetilde{\bm{x}};\bm{X}_{\cdot -i}) \notag \\
            \bm{X}^{\star \top}_{\cdot i}\bm{Y}_{\cdot i}&\le \widetilde{\bm{x}}^\top\bm{Y}_{\cdot i} \notag \\
            &=\bm{X}^{\star\top}_{\cdot i}\bm{Y}_{\cdot i} - \epsilon \bm{Y}_{ji} + \epsilon \bm{Y}_{li},
        \end{aligned}
    \end{equation*}
    which leads to the inequality
    \begin{equation}
        \begin{aligned}
            \bm{Y}_{ji} \le \bm{Y}_{li}. \label{lemma1:ineqn:1}
        \end{aligned}
    \end{equation}

    Next, we can similarly construct another point $\hat{\bm{x}}\in\Delta_k$ with its $j^{th}$ element equal to $\bm{X}^{\star}_{ji}+\epsilon$, the $k^{th}$ element equal to $\bm{X}^{\star}_{ki}-\epsilon$, and all other elements remain the same as in $\bm{X}^{\star}_{\cdot i}$. Subsequently, we can also derive that
    \begin{equation*}
        \begin{aligned}
            g(\bm{X}^{\star}_{\cdot i};\bm{X}_{\cdot -i})&\le g(\hat{\bm{x}};\bm{X}_{\cdot -i}) \notag \\
            &=\bm{X}^{\star\top}_{\cdot i}\bm{Y}_{\cdot i}+\epsilon \bm{Y}_{ji}-\epsilon\bm{Y}_{li},
        \end{aligned}
    \end{equation*}
    which leads to another inequality
    \begin{equation}
        \begin{aligned}
            \bm{Y}_{li}\le \bm{Y}_{ji}. \label{lemma1:ineqn:2}
        \end{aligned}
    \end{equation}

    Consequently, combined inequalities (\ref{lemma1:ineqn:1}) and (\ref{lemma1:ineqn:2}), we have
    \begin{equation*}
        \bm{Y}_{ji}=\bm{Y}_{li},
    \end{equation*}
    for $j,l\in \mathcal{K}(\bm{X}^{\star}_{\cdot i})$.

    From this, we can deduce that
    \begin{equation*}
        \begin{aligned}
            \bm{Y}_{j_1i}=\bm{Y}_{j_2i}=\cdots =\bm{Y}_{j_{\vert\mathcal{K}(\bm{X}^{\star}_{\cdot i})\vert}i}= t,
        \end{aligned}
    \end{equation*}
    where $j_1,\cdots,j_{\vert\mathcal{K}(\bm{X}^{\star}_{\cdot i})\vert}\in\mathcal{K}(\bm{X}^{\star}_{\cdot i})$.

    Next, we find that
    \begin{equation*}
        \begin{aligned}
        g(\bm{X}^{\star}_{\cdot i};\bm{X}_{\cdot -i})&=2\bm{X}^{\star\top}_{\cdot i}\bm{Y}_{\cdot i} \\
        &=2\sum_{j=1}^{k}\bm{X}^{\star}_{ji}\bm{Y}_{ji}    \\
        &=2\sum_{j=1, j\in\mathcal{K}(\bm{X}^{\star}_{\cdot i})}^{N} \bm{X}^{\star}_{ji}\bm{Y}_{ji} \\
        &=2t\sum_{j=1, j\in\mathcal{K}(\bm{X}^{\star}_{\cdot i})}^{N} \bm{X}^{\star}_{ji} \\
        &=2t.
        \end{aligned}
    \end{equation*}

    Similarly, we have
    \begin{equation*}
        \begin{aligned}
            g(\bm{X}_{\cdot i};\bm{X}_{\cdot -i})&=2\bm{X}_{\cdot i}^\top\bm{Y}_{\cdot i} \\
            &=2\sum_{j=1}^k\bm{X}_{ji}\bm{Y}_{ji} \\
            &=2\sum_{j=1,j\in\mathcal{K}(\bm{X}_{\cdot i})} \bm{X}_{ji}\bm{Y}_{ji} \\
            &\overset{\text{Lemma \ref{lemma1}}}{=}2t\sum_{j=1,j\in\mathcal{K}(\bm{X}_{\cdot i})} \bm{X}_{ji}\\
            &=2t
        \end{aligned}
    \end{equation*}

    Accordingly, we conclude that
    \begin{equation*}
        \begin{aligned}
            g(\bm{X}_{\cdot i};\bm{X}_{\cdot -i})=g(\bm{X}^{\star}_{\cdot i};\bm{X}_{\cdot -i}),
        \end{aligned}
    \end{equation*}
    which leads us to the result
    \begin{equation*}
        \begin{aligned}
            f(\bm{X};\bm{W})=f(\bm{X}^{\star};\bm{W}),
        \end{aligned}
    \end{equation*}
    where $\bm{X}_{\cdot i}\in \widetilde{\mathcal{N}}(\bm{X}^{\star}_{\cdot i})$, $\bm{X}_{\cdot -i}=\bm{X}^{\star}_{\cdot -i}$.
\end{proof}

Accordingly, for any \(\bm{X} \in \mathcal{N}(\bm{X}^{\star})\), we iteratively apply Lemma \ref{lemma:prepare_local_optimal_neighborhood} to each column of $\bm{X}^{\star}$ while holding the other columns fixed, thereby proving Theorem \ref{theorem:local_optimal_neighborhood}.

\end{proof}

\section{Proof of Theorem \ref{theorem:equal_obj_value}}
\begin{proof}

Based on $\overline{\bm{X}}$, we can construct the random variable $\widetilde{\bm{X}}$, where $\widetilde{\bm{X}}_{\cdot i} \sim \text{Cat}(\bm{x};\bm{p}=\overline{\bm{X}}_{\cdot i})$. The probability mass function is given by
\begin{equation}
\begin{aligned}
    \textbf{P}(\widetilde{\bm{X}}_{\cdot i}=\bm{e}_\ell)=\overline{\bm{X}}_{\ell i}, \label{for:construct}
\end{aligned} 
\end{equation}
where $\ell=1,\cdots,k$. 

Next, we have
    \begin{align}
        \mathbb{E}_{\widetilde{\bm{X}}}[f(\widetilde{\bm{X}};\bm{W})]&=\mathbb{E}_{\widetilde{\bm{X}}}[\widetilde{\bm{X}}\bm{W}\widetilde{\bm{X}}^\top]=\mathbb{E}_{\widetilde{\bm{X}}}[\sum_{i=1}^N\sum_{j=1}^N \bm{W}_{ij} \widetilde{\bm{X}}_{\cdot i}^\top\widetilde{\bm{X}}_{\cdot j}] \notag\\
        &=\sum_{i=1}^N\sum_{j=1}^N\bm{W}_{ij}\mathbb{E}_{\widetilde{\bm{X}}_{\cdot i}\widetilde{\bm{X}}_{\cdot j}}[\widetilde{\bm{X}}_{\cdot i}^\top\widetilde{\bm{X}}_{\cdot j}] \notag\\
        &=\sum_{i=1}^N\sum_{j=1}^N\bm{W}_{ij}\mathbb{E}_{\widetilde{\bm{X}}_{\cdot i}\widetilde{\bm{X}}_{\cdot j}}[\mathbbm{1}(\widetilde{\bm{X}}_{\cdot i}=\widetilde{\bm{X}}_{\cdot j})] \notag \\
        &=\sum_{i=1}^N\sum_{j=1}^N\bm{W}_{ij}\mathbb{P}(\widetilde{\bm{X}}_{\cdot i}=\widetilde{\bm{X}}_{\cdot j}) \notag\\
        &=\sum_{i=1}^N\sum_{j=1,j\neq i}^N\bm{W}_{ij}\mathbb{P}(\widetilde{\bm{X}}_{\cdot i}=\widetilde{\bm{X}}_{\cdot j}).  \label{for:7}
    \end{align}

Since $\widetilde{\bm{X}}_{\cdot i}$ and $\widetilde{\bm{X}}_{\cdot j}$ are independent for $i\neq j$, we have
\begin{align}
    \mathbb{P}(\widetilde{\bm{X}}_{\cdot i}=\widetilde{\bm{X}}_{\cdot j})&=\sum_{\ell=1}^k\mathbb{P}(\widetilde{\bm{X}}_{\cdot i}=\widetilde{\bm{X}}_{\cdot j}=\bm{e}_\ell) \notag\\
        &=\sum_{\ell=1}^k\mathbb{P}(\widetilde{\bm{X}}_{\cdot i}=\bm{e}_\ell,\widetilde{\bm{X}}_{\cdot j}=\bm{e}_\ell) \notag \\
        &=\sum_{\ell=1}^k\mathbb{P}(\widetilde{\bm{X}}_{\cdot i}=\bm{e}_\ell)\mathbb{P}(\widetilde{\bm{X}}_{\cdot j}=\bm{e}_\ell) \notag \\
        &=\sum_{\ell=1}^k\overline{\bm{X}}_{\ell i}\overline{\bm{X}}_{\ell j} \notag \\
        &=\overline{\bm{X}}_{\cdot i}^\top\overline{\bm{X}}_{\cdot j}.\label{for:8}
\end{align}

Substitute (\ref{for:8}) into (\ref{for:7}), we obtain
\begin{align}
    \mathbb{E}_{\widetilde{\bm{X}}}[f(\widetilde{\bm{X}};\bm{W})]=\sum_{i=1}^N\sum_{j=1}^N\bm{W}_{ij}\overline{\bm{X}}_{\cdot i}^\top\overline{\bm{X}}_{\cdot j}=f(\overline{\bm{X}};\bm{W}). \label{res:thm1}
\end{align}

\end{proof}

\section{The Results on Unweighted Gset Instances \label{app:gset}}

\begin{sidewaystable}
  \centering
  \caption{Complete results on Gset instances for Max-Cut.}
  \label{tab:3}
\resizebox{\textwidth}{!}{%
% [inline block 0: 4 envs, 85578 chars -> data_tex | \begin{tabular}{ccccccccccccccccccccccccc} \hline...]
%
}
\end{sidewaystable}

\newpage

\section{The Results on Weighted Gset Instances \label{app:full_weighted_gset}}

The computational time on weighted Gset with perturbation ratio of $[0.9, 1.1]$ and $[0, 100]$ are shown in Fig.~\ref{fig:full_time1} and Fig.~\ref{fig:full_time2} respectively. The cut values are shown in Table~\ref{tab:weighted_gset2} and Table~\ref{tab:weighted_gset3} respectively.

\begin{figure}[t]
    \centering
    \begin{subfigure}{0.49\textwidth}
        \centering      \includegraphics[width=\textwidth]{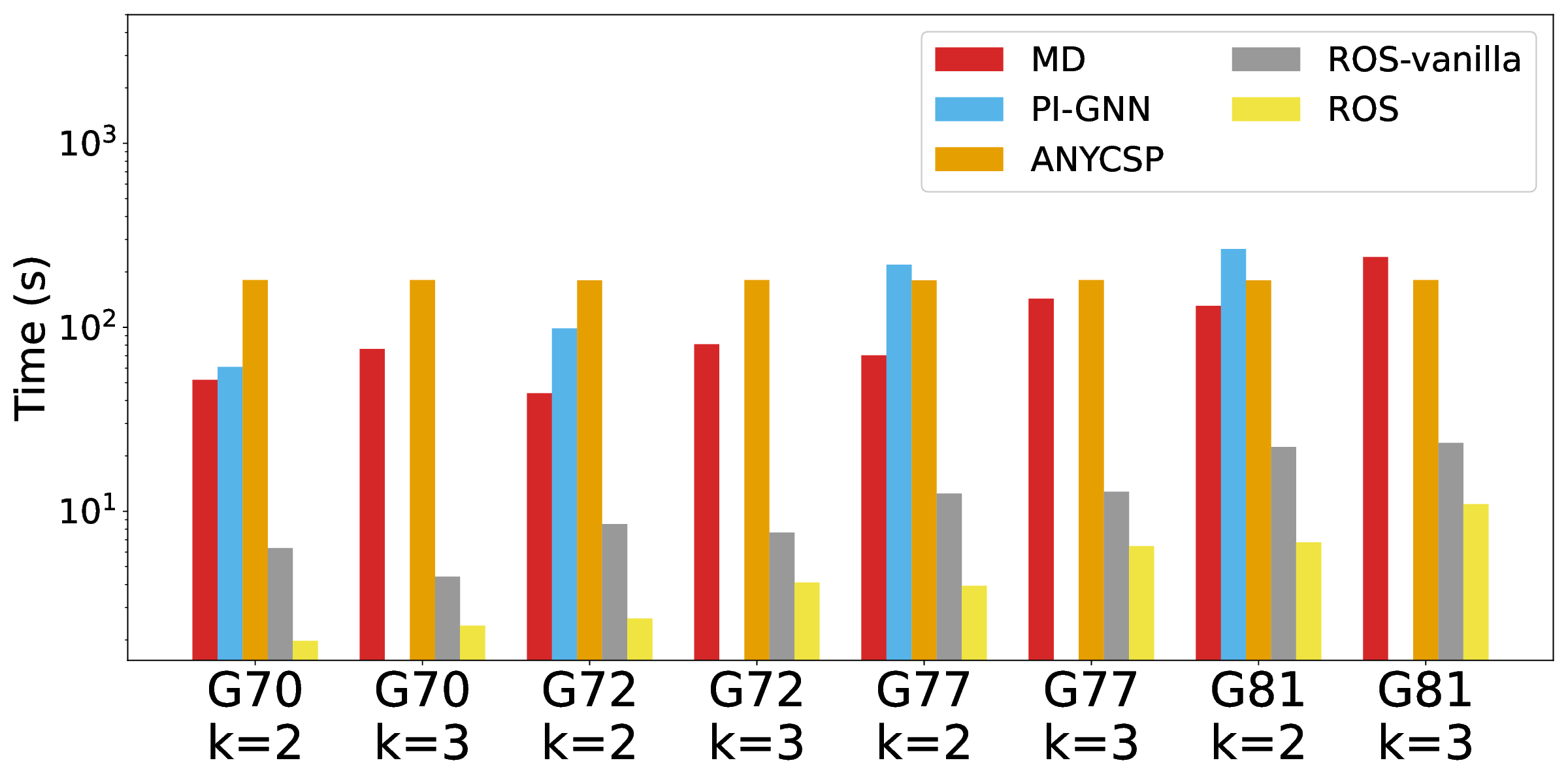}
        \caption{Weighted Gset with perturbation ratio $[0.9, 1.1]$\label{fig:full_time1}}
    \end{subfigure}
    \hfill
    \begin{subfigure}{0.49\textwidth}
        \centering
        \includegraphics[width=\textwidth]{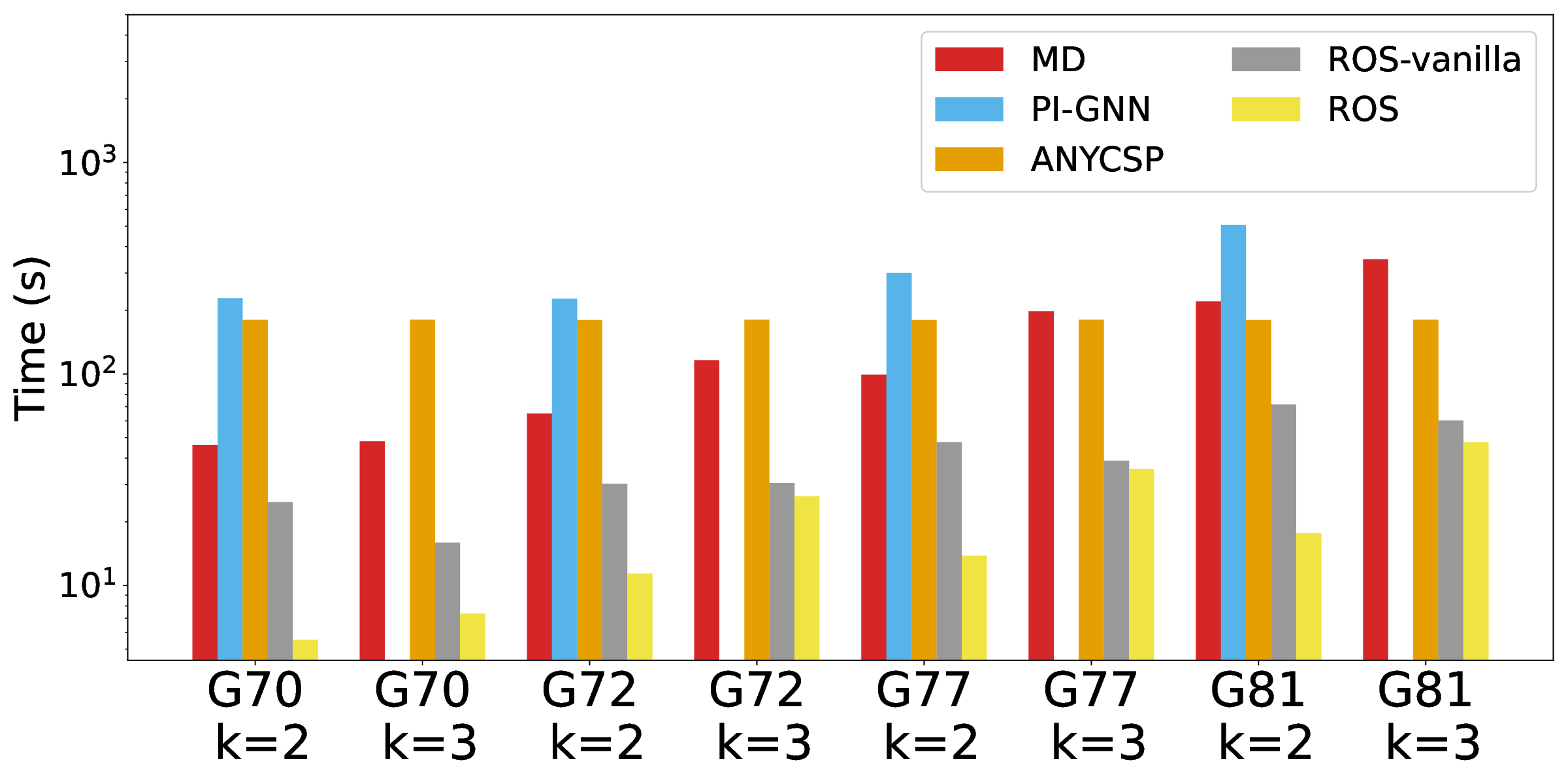}
        \caption{Weighted Gset with perturbation ratio $[0, 100]$\label{fig:full_time2}}
    \end{subfigure}
    \caption{The computational time comparison of Max-$k$-Cut problems.
    \label{fig:full_time}}
\end{figure}

\begin{table}[t]
\centering
\caption{Cut value comparison of Max-$k$-Cut problems on weighted Gset instances with perturbation ratio $[0.9, 1.1]$.}
\label{tab:weighted_gset2}
\resizebox{\textwidth}{!}{%
\begin{tabular}{ccccccccc}
\midrule
\multirow{2}{*}{Methods}                      & \multicolumn{2}{c}{\texttt{G70 (N=10,000)}}              & \multicolumn{2}{c}{\texttt{G72 (N=10,000)}}                 & \multicolumn{2}{c}{\texttt{G77 (N=14,000)}}                 & \multicolumn{2}{c}{\texttt{G81 (N=20,000)}}                \\\cmidrule(l){2-3}\cmidrule(l){4-5}\cmidrule(l){6-7}\cmidrule(l){8-9}
                           & $k=2$   &$k=3$&$k=2$   &$k=3$& $k=2$   &$k=3$& $k=2$ &$k=3$   \\ \midrule
\texttt{MD}&$8594.38$&$9709.56$&$5647.28$&$6585.04$&$8051.81$&$9337.31$&$11326.30$&$13179.33$                           \\ 
\texttt{PI-GNN}&$8422.79$&\textbf{--}&$5309.65$&\textbf{--}&$7470.89$&\textbf{--}&$10416.44$&\textbf{--}                            \\ 
\texttt{ANYCSP}  & $5198.87$&$5375.92$& $-15.57$&$-25.33$& $81.76$& $114.36$ & $33.49$& $-4.25$ \\ \midrule
\texttt{ROS-vanilla}&$9177.21$ & $9991.95$ &  $6542.78$ & $7733.87$ & $9265.65$ &  $10944.35$ &$13132.52$& $15456.28$\\
\texttt{ROS}     & $8941.80$&$9970.72$& $6165.62$&$7366.54$& $8737.59$&$10359.25$&    $12325.85$&$14570.04$  \\\midrule
\end{tabular}%
}
\end{table}

\begin{table}[t]
\centering
\caption{Cut value comparison of Max-$k$-Cut problems on weighted Gset instances with perturbation ratio $[0, 100]$.}
\label{tab:weighted_gset3}
\resizebox{\textwidth}{!}{%
\begin{tabular}{ccccccccc}
\midrule
\multirow{2}{*}{Methods}                      & \multicolumn{2}{c}{\texttt{G70 (N=10,000)}}              & \multicolumn{2}{c}{\texttt{G72 (N=10,000)}}                 & \multicolumn{2}{c}{\texttt{G77 (N=14,000)}}                 & \multicolumn{2}{c}{\texttt{G81 (N=20,000)}}                \\\cmidrule(l){2-3}\cmidrule(l){4-5}\cmidrule(l){6-7}\cmidrule(l){8-9}
                           & $k=2$   &$k=3$&$k=2$   &$k=3$& $k=2$   &$k=3$& $k=2$ &$k=3$   \\ \midrule
\texttt{MD}&$456581.30$&$497167.48$&$338533.07$&$392908.80$&$482413.19$&$558264.21$&$682809.47$&$790089.41$                           \\ 
\texttt{PI-GNN}&$442650.59$&\textbf{--}&$312802.48$&\textbf{--}&$442354.44$&\textbf{--}&$623256.74$&\textbf{--}                            \\ 
\texttt{ANYCSP}  & $467696.98$&$491654.75$& $-1903.50$&$-2498.86$& $9712.13$& $10130.89$ & $ 2842.64$& $2845.46$ \\ \midrule
\texttt{ROS-vanilla}&$472067.14$ & $498273.60$ &  $367795.62$ & $421189.97$ & $524010.92$ &  $597597.50$ &$742432.41$& $846395.85$\\
\texttt{ROS}     & $470268.97$&$498269.90$& $362910.89$&$415905.88$& $515991.31$&$590312.40$&    $731468.67$&$835424.19$  \\\midrule
\end{tabular}%
}
\end{table}

\section{Ablation Study}

\subsection{Model Ablation}
We conducted additional ablation studies to clarify the contributions of different modules.

\textbf{Effect of Neural Networks:} We consider two cases: (i) replace GNNs by multi-layer perceptrons (denoted by \texttt{ROS-MLP}) in our ROS framework and (ii) solve the relaxation via mirror descent (denoted by \texttt{MD}). Experiments on the Gset dataset show that \texttt{ROS} consistently outperforms \texttt{ROS-MLP} and \texttt{MD}, highlighting the benefits of using GNNs for the relaxation step.

\textbf{Effect of Random Sampling:} We compared \texttt{ROS} with \texttt{PI-GNN}, which employs heuristic rounding instead of our random sampling algorithm. Results indicate that \texttt{ROS} generally outperforms \texttt{PI-GNN}, demonstrating the importance of the sampling procedure.  

These comparisons, detailed in Tables \ref{tab:gset_objective_ablataion} and \ref{tab:gset_time_ablataion}, confirm that both the GNN-based optimization and the random sampling algorithm contribute significantly to the overall performance.

\begin{table}[t]
\centering
\caption{Cut values returned by each method on Gset.}
\begin{tabular}{ccccccccc}
\toprule
\multirow{2}{*}{Methods} & \multicolumn{2}{c}{\texttt{G70}} & \multicolumn{2}{c}{\texttt{G72}} & \multicolumn{2}{c}{\texttt{G77}} & \multicolumn{2}{c}{\texttt{G81}} \\
 \cmidrule(l){2-3}  \cmidrule(l){4-5}  \cmidrule(l){6-7} \cmidrule(l){8-9}
                                  & $k=2$      & $k=3$      & $k=2$      & $k=3$      & $k=2$      & $k=3$      & $k=2$      & $k=3$      \\
\midrule
\texttt{ROS-MLP}                & $8867$     & $9943$     & $6052$     & $6854$     & $8287$     & $9302$     & $12238$    & $12298$    \\
\texttt{PI-GNN}                  & $8956$     & --         & $4544$     & --         & $6406$     & --         & $8970$     & --         \\
\texttt{MD}                      & $8551$     & $9728$     & $5638$     & $6612$     & $7934$     & $9294$     & $11226$    & $13098$    \\
\texttt{ROS}                     & $8916$     & $9971$     & $6102$     & $7297$     & $8740$     & $10329$    & $12332$    & $14464$    \\
\bottomrule
\end{tabular}
\label{tab:gset_objective_ablataion}
\end{table}

\begin{table}[t]
\centering
\caption{Computational time for each method on Gset.}
\begin{tabular}{ccccccccc}
\toprule
\multirow{2}{*}{Methods} & \multicolumn{2}{c}{\texttt{G70}} & \multicolumn{2}{c}{\texttt{G72}} & \multicolumn{2}{c}{\texttt{G77}} & \multicolumn{2}{c}{\texttt{G81}} \\
 \cmidrule(l){2-3}  \cmidrule(l){4-5}  \cmidrule(l){6-7} \cmidrule(l){8-9}
                                  & $k=2$      & $k=3$      & $k=2$      & $k=3$      & $k=2$      & $k=3$      & $k=2$      & $k=3$      \\
\midrule
\texttt{ROS-MLP}                & $3.49$     & $3.71$     & $3.93$     & $4.06$     & $8.39$     & $9.29$     & $11.98$    & $16.97$    \\
\texttt{PI-GNN}                  & $34.50$    & --         & $253.00$   & --         & $349.40$   & --         & $557.70$   & --         \\
\texttt{MD}                      & $54.30$    & $74.80$    & $44.20$    & $79.20$    & $66.00$    & $142.30$   & $130.80$   & $241.10$   \\
\texttt{ROS}                     & $3.40$     & $2.50$     & $3.90$     & $3.50$     & $8.10$     & $8.50$     & $9.30$     & $9.70$     \\
\bottomrule
\end{tabular}
\label{tab:gset_time_ablataion}
\end{table}

\subsection{Sample Effect Ablation}

We investigated the effect of the number of sampling iterations and report the results in Tables  \ref{tab:gset_objective_T}, \ref{tab:gset_sampling_time_T}, \ref{tab:regular_objective_T}, and \ref{tab:regular_sampling_time_T}.

\textbf{Cut Value} (Table \ref{tab:gset_objective_T}, Table \ref{tab:regular_objective_T}): The cut values stabilize after approximately 5 sampling iterations, demonstrating strong performance without requiring extensive sampling. 

\textbf{Sampling Time} (Table \ref{tab:gset_sampling_time_T}, Table \ref{tab:regular_sampling_time_T}): The time spent on sampling remains negligible compared to the total computational time, even with an increased number of samples.  
    
These results highlight the efficiency of our sampling method, achieving stable and robust performance with little computational cost.

\begin{table}[t]
\centering
\caption{Cut value results corresponding to the times of sample $T$ on Gset.}
\begin{tabular}{ccccccccc}
\toprule
\multirow{2}{*}{$T$} & \multicolumn{2}{c}{\texttt{G70}} & \multicolumn{2}{c}{\texttt{G72}} & \multicolumn{2}{c}{\texttt{G77}} & \multicolumn{2}{c}{\texttt{G81}} \\
 \cmidrule(l){2-3}  \cmidrule(l){4-5}  \cmidrule(l){6-7} \cmidrule(l){8-9}
    & $k=2$ & $k=3$           & $k=2$ & $k=3$           & $k=2$ & $k=3$           & $k=2$ & $k=3$           \\
\midrule
0   & 8912.62& 9968.11& 6099.88&7304.45&8736.58&10323.61&12328.83& 14458.09           \\
1   & 8911  & 9968            & 6100  & 7305            & 8736  & 10321           & 12328 & 14460           \\
5   & 8915  & 9969            & 6102  & 7304            & 8740  & 10326           & 12332 & 14462           \\
10  & 8915  & 9971            & 6102  & 7305            & 8740  & 10324           & 12332 & 14459           \\
25  & 8915  & 9971            & 6102  & 7307            & 8740  & 10326           & 12332 & 14460           \\
50  & 8915  & 9971            & 6102  & 7307            & 8740  & 10327           & 12332 & 14461           \\
100 & 8916  & 9971            & 6102  & 7308            & 8740  & 10327           & 12332 & 14462           \\
\bottomrule
\end{tabular}
\label{tab:gset_objective_T}
\end{table}

\begin{table}[t]
\centering
\caption{Sampling time results corresponding to the times of sample $T$ on Gset.}
\begin{tabular}{ccccccccc}
\toprule
\multirow{2}{*}{$T$} & \multicolumn{2}{c}{\texttt{G70}} & \multicolumn{2}{c}{\texttt{G72}} & \multicolumn{2}{c}{\texttt{G77}} & \multicolumn{2}{c}{\texttt{G81}} \\
 \cmidrule(l){2-3}  \cmidrule(l){4-5}  \cmidrule(l){6-7} \cmidrule(l){8-9}
    & $k=2$ & $k=3$           & $k=2$ & $k=3$           & $k=2$ & $k=3$           & $k=2$ & $k=3$           \\
\midrule
1   & 0.0011 & 0.0006         & 0.0011 & 0.0006         & 0.0020 & 0.0010         & 0.0039 & 0.0020         \\
5   & 0.0030 & 0.0029         & 0.0029 & 0.0030         & 0.0053 & 0.0053         & 0.0099 & 0.0098         \\
10  & 0.0058 & 0.0059         & 0.0058 & 0.0058         & 0.0104 & 0.0104         & 0.0196 & 0.0196         \\
25  & 0.0144 & 0.0145         & 0.0145 & 0.0145         & 0.0259 & 0.0260         & 0.0489 & 0.0489         \\
50  & 0.0289 & 0.0289         & 0.0288 & 0.0289         & 0.0517 & 0.0518         & 0.0975 & 0.0977         \\
100 & 0.0577 & 0.0577         & 0.0576 & 0.0578         & 0.1033 & 0.1037         & 0.1949 & 0.1953         \\
\bottomrule
\end{tabular}
\label{tab:gset_sampling_time_T}
\end{table}

\begin{table}[tb]
\centering
\caption{Cut value results corresponding to the times of sample $T$ on random regular graphs.}
\begin{tabular}{ccccccc}
\toprule
\multirow{2}{*}{$T$} & \multicolumn{2}{c}{$n=100$} & \multicolumn{2}{c}{$n=1,000$} & \multicolumn{2}{c}{$n=10,000$} \\
 \cmidrule(l){2-3}  \cmidrule(l){4-5}  \cmidrule(l){6-7}\
              & $k=2$ & $k=3$               & $k=2$ & $k=3$               & $k=2$ & $k=3$               \\
\midrule
0&126.71&244.77&1291.86&2408.71&12856.53&24102.22 \\ 
1             & 127   & 245                 & 1293  & 2408                & 12856 & 24103               \\
5             & 127   & 245                 & 1293  & 2410                & 12863 & 24103               \\
10            & 127   & 245                 & 1293  & 2410                & 12862 & 24103               \\
25            & 127   & 245                 & 1293  & 2410                & 12864 & 24103               \\
50            & 127   & 245                 & 1293  & 2410                & 12864 & 24103               \\
100           & 127   & 245                 & 1293  & 2410                & 12864 & 24103               \\
\bottomrule
\end{tabular}
\label{tab:regular_objective_T}
\end{table}

\begin{table}[tb]
\centering
\caption{Sampling time results corresponding to the times of sample $T$ on random regular graphs.}
\begin{tabular}{ccccccc}
\toprule
\multirow{2}{*}{$T$} & \multicolumn{2}{c}{$n=100$} & \multicolumn{2}{c}{$n=1,000$} & \multicolumn{2}{c}{$n=10,000$} \\
 \cmidrule(l){2-3}  \cmidrule(l){4-5}  \cmidrule(l){6-7}\
              & $k=2$ & $k=3$               & $k=2$ & $k=3$               & $k=2$ & $k=3$               \\
\midrule
1             & 0.0001 & 0.0001            & 0.0001 & 0.0001            & 0.0006 & 0.0006            \\
5             & 0.0006 & 0.0006            & 0.0007 & 0.0007            & 0.0030 & 0.0030            \\
10            & 0.0011 & 0.0011            & 0.0014 & 0.0013            & 0.0059 & 0.0059            \\
25            & 0.0026 & 0.0026            & 0.0033 & 0.0031            & 0.0145 & 0.0145            \\
50            & 0.0052 & 0.0052            & 0.0065 & 0.0060            & 0.0289 & 0.0289            \\
100           & 0.0103 & 0.0103            & 0.0128 & 0.0122            & 0.0577 & 0.0578            \\
\bottomrule
\end{tabular}
\label{tab:regular_sampling_time_T}
\end{table}

\end{document}